\documentclass{amsart}
\address{Department of Mathematics, Institute of Science Tokyo, 2-12-1 Ookayama, Meguro-ku, Tokyo, 152-8551, Japan}
\email{tanaka.a.2255@m.isct.ac.jp}

\usepackage{amsmath,amssymb}
\usepackage{amsfonts}
\usepackage{graphicx}

\usepackage{hyperref}
\usepackage{caption}
\usepackage{mathtools} 
\usepackage{enumitem}
\usepackage[margin=10pt]{caption} 

\usepackage[final]{pdfpages}
\usepackage{amsthm}
\usepackage{color}

\usepackage{tikz}    

\theoremstyle{plain}
\newtheorem{thm}{Theorem}[section]

\newtheorem{cor}[thm]{Corollary}

\newtheorem*{thm*}{Theorem}
\newtheorem*{cor*}{Corollary}

\theoremstyle{definition}
\newtheorem{dfn}[thm]{Definition}
\newtheorem{rem}[thm]{Remark}

\newtheorem*{que*}{Question}
\newtheorem*{con*}{Conjecture}

\newcommand{\bn}{\mathrm{bn}}   
\newcommand{\hn}{\mathrm{hn}}   
\newcommand{\gr}{\mathrm{gr}}   
\newcommand{\wh}{\mathrm{wh}}   
\newcommand{\hl}{\mathrm{hl}}   
\newcommand{\crs}{\mathrm{cr}}  

\newcommand{\G}[1]{\ensuremath{\mathrm{G#1}}}
\newcommand{\W}[1]{\ensuremath{\mathrm{W#1}}}

\begin{document}

\title[Lefschetz Fibrations on Knot Traces of Alternating Knots]{Lefschetz Fibrations on Knot Traces of Alternating and Extended Alternating Knots}
\author{Atsushi Tanaka}
\date{}

\begin{abstract}
In our previous work, we introduced a simple and explicit method for constructing a positive allowable Lefschetz fibration (PALF) from a $2$-handlebody decomposition of any given compact Stein surface. In this paper, we apply this construction to knot traces whose attaching circles are either alternating knots or \emph{extended alternating knots} (a generalized class introduced herein). We demonstrate that each such knot trace admits a PALF whose regular fiber has a genus exactly equal to the number of white regions in the associated planar graph, yielding PALFs whose regular fibers have a significantly small genus. As immediate corollaries, we prove that knot traces of positive pretzel knots with $s$ rows admit PALFs with regular fibers of genus $s-1$, and those of positive torus knots admit PALFs with regular fibers of genus $1$. Furthermore, we define \emph{positive torus-pretzel knots} by replacing each twist block of a positive pretzel knot with the crossings of a positive torus knot, and we establish that their knot traces also admit PALFs with regular fibers of genus $s-1$.
\end{abstract}

\maketitle

\section{Introduction}\label{sec:intro}

For standard definitions and foundational results concerning Lefschetz fibrations, we refer the reader to \cite{MR4327688}, \cite{MR1707327}, and \cite{MR2114165}. Loi and Piergallini \cite{MR1835390}, Akbulut and Ozbagci \cite{MR1825664}, and Akbulut and Arikan \cite{MR2972525} independently proved that every compact Stein surface (a Stein surface in short) admits a positive allowable Lefschetz fibration over the disk $D^2$ with bounded fibers (PALF in short), and they provided explicit constructions of such fibrations.

In our previous work \cite{Tanaka2025construction} and \cite{Tanaka2026combinatorial}, we presented a simple and systematic method for constructing a PALF from a $2$-handlebody decomposition of any given Stein surface. This method produces PALFs whose regular fibers have a significantly small genus, thereby providing an alternative constructive proof of the aforementioned result. 

Throughout this paper, we use the term ``knot'' to include links as well. Unless otherwise specified, we assume that all knots are \emph{connected, prime, and represented by reduced diagrams (i.e., diagrams without nugatory crossings)}. We apply our novel construction method to knot traces whose attaching circles are either alternating knots or \emph{extended alternating knots}---a generalized class of knots introduced in this paper, which we formally define in Section~\ref{sec:extended}.

The main results of this paper are formalized in the following two theorems.

\begin{thm}\label{thm:alternating_1}
Let $X$ be a Stein surface consisting of a single $0$-handle and $m$ $2$-handles ($m \geq 1$) attached along a Legendrian knot whose topological type is an alternating knot satisfying Condition~$\mathrm{M}$ (defined in Section~\ref{sec:preliminaries}). Consider the planar graph associated with this alternating knot. Then, there exists a positive allowable Lefschetz fibration (PALF) whose total space is diffeomorphic to $X$, and whose regular fiber has a genus exactly equal to the number of white regions in the corresponding planar graph.
\end{thm}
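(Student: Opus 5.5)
The plan is to feed the alternating diagram into the construction of \cite{Tanaka2025construction, Tanaka2026combinatorial}, which converts a Legendrian $2$-handlebody presentation into a PALF. We then choose the diagram and its checkerboard colouring so that the page surface produced there has genus equal to the number of white regions.

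First, take a reduced alternating diagram $D$ of the knot. Checkerboard colour its complementary regions so that, at every crossing, the colouring is compatible with the alternating pattern. Form the associated planar graph, with the black regions as vertices and the crossings as edges. The white regions are then exactly the bounded faces of this graph, plus the unbounded one depending on convention. We will use the paper's count.

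Condition~M is presumably what allows $D$ to be isotoped to a Legendrian front with the prescribed Thurston--Bennequin number (framing) such that every crossing has a standard local model. We would invoke Condition~M precisely here.

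Second, apply the construction from our previous work. The boundary of the $0$-handle carries the standard open book with page $D^2$. We stabilise it along a surface built from the diagram: one band per crossing, attached to a disk neighbourhood of each black region, or equivalently a plumbing of annuli along the graph. The result is a page $F$ containing the Legendrian attaching circle as a homologically essential curve, with page framing equal to $\mathrm{tb}$.

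Computing $\chi(F)$ as (vertices) $-$ (edges) and counting boundary components shows that $F$ deformation retracts to a neighbourhood of the graph. In the construction's normalisation, where each white face contributes one handle pair after the stabilisations needed to make the knot sit on the page, this gives genus equal to the number of white regions.

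Each positive stabilisation adds a $1$-handle cancelled by a $2$-handle along a vanishing cycle. The Legendrian knot, lying on the page with framing $\mathrm{tb}-1$, is a further vanishing cycle. The extra $m-1$ parallel $2$-handles, which are Legendrian pushoffs, are also placed on the page. Together these produce a PALF on $X$, and allowability holds because every vanishing cycle is nonseparating on $F$.

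The main obstacle is the genus count. A naive stabilisation sequence produces genus roughly equal to the number of crossings, so one must show that the bands can be arranged so that each white region, rather than each crossing, contributes one unit of genus. This is where the alternating structure enters: around each white face the crossings alternate, so the bands attached along its boundary can be merged into a single handle pair.

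We would verify this locally by induction on the number of white regions. The base case is a single twist region, or equivalently a $(2,n)$ torus knot, which should give genus $1$. The inductive step adjoins a white face and checks that $\chi(F)$ drops by exactly $2$ while the number of boundary components is preserved, using Condition~M to rule out degenerate local configurations.
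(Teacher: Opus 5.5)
\textbf{There is a genuine gap.} The genus count is the whole content of the theorem, and it is never carried out; the surface you do describe cannot deliver it.

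For your page $F$ (a disk for each gray region plus one band per crossing) you compute $\chi(F)=\mathrm{gr}-\mathrm{cr}=1-\mathrm{wh}$. A surface with this Euler characteristic and genus $\mathrm{wh}$ would have $2-2\mathrm{wh}-\chi(F)=1-\mathrm{wh}\le 0$ boundary components. So $F$ is never the required page once $\mathrm{wh}\ge 1$. The ``equivalent'' plumbing of annuli along the graph is not equivalent: it has $\chi=-\mathrm{cr}$.

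Your inductive step also contradicts your own formula, since $1-\mathrm{wh}$ drops by $1$, not $2$, when a white face is added. You never show how the Legendrian knot sits on $F$ with page framing $\mathrm{tb}$ either. If the bands are half-twisted, $F$ is a checkerboard surface with the knot as its boundary, and the framing it induces generally differs from $\mathrm{tb}$. Statements such as ``each white face contributes one handle pair'' or ``the bands around a white face can be merged'' simply restate the conclusion.

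\textbf{What is missing} is explicit bookkeeping for the page the grid construction actually produces. First, each crossing's vertical segment is deformed into a height-two segment with NE and SW corners. The page is then the $0$-handle plus three families of $1$-handles:
\begin{itemize}
\item one at the NW-corner segment on the left edge of each white region ($\mathrm{wh}$ handles);
\item one at each other NW-corner segment ($\mathrm{hl}$ handles, each creating a hole inside a gray region when the $0$-handle boundary is pushed to the right);
\item one at each NE-corner segment crossing a horizontal segment ($\mathrm{cr}$ handles).
\end{itemize}
The boundary consists of one outer component per gray region plus one per hole, so $\mathrm{bn}=\mathrm{gr}+\mathrm{hl}$. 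Then $2-2g-\mathrm{bn}=1-(\mathrm{wh}+\mathrm{hl}+\mathrm{cr})$ together with $\mathrm{gr}-\mathrm{cr}+\mathrm{wh}=1$ gives $g=\mathrm{wh}$ at once. The $\mathrm{hl}$ terms cancel, and no induction is needed.

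\textbf{Role of Condition~M.} It is not a condition about fronts with standard crossings; it is used exactly to justify the two counts above. It gives the type~(a) colouring. It also guarantees that every graph edge, oriented from the starting point to the terminal point, runs from the upper-left to the lower-right quadrant. Hence the $0$-handle boundary can be pushed rightward column by column until the left neighbourhood of every NW-corner segment lies on it.

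Finally, $m$ is the number of components of the attaching link (``knot'' includes links in this paper), each giving its own vanishing cycle $C_{0k}$. It does not count $m-1$ Legendrian pushoffs of a single knot.
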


\begin{thm}\label{thm:extended_1}
Let $X$ be a Stein surface consisting of a single $0$-handle and $m$ $2$-handles ($m \geq 1$) attached along a Legendrian knot whose topological type is an extended alternating knot. Consider the planar graph associated with this extended alternating knot. Then, there exists a positive allowable Lefschetz fibration (PALF) whose total space is diffeomorphic to $X$, and whose regular fiber has a genus exactly equal to the number of white regions in the corresponding planar graph.
\end{thm}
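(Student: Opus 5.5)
The plan is to reduce Theorem~\ref{thm:extended_1} to the construction from \cite{Tanaka2025construction,Tanaka2026combinatorial}, in the same way one would handle Theorem~\ref{thm:alternating_1}. That construction takes a Legendrian front diagram of the attaching link of a Stein $2$-handlebody and produces a PALF. The fiber is obtained from a disk (the $0$-handle) by attaching $1$-handles, one for each suitable local feature of the front (cusps or crossings that can be resolved). The vanishing cycles are read off from the $2$-handles and from the extra cancelling $1$/$2$-handle pairs that are introduced. The genus of the fiber equals the number of $1$-handles minus the number of boundary components, all divided by two. So the whole task is to choose a good front projection of an extended alternating knot and to control this count.

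First, using the definition in Section~\ref{sec:extended}, I would put the extended alternating diagram into a standard form adapted to its planar graph. The black regions should correspond to the twist regions or blocks, with each block of crossings replaced by its extended pattern. The white regions should be the faces that remain after contracting these blocks. I would then Legendrian-realize this diagram blockwise. Each alternating twist block, or each extended block, should be drawn as a standard Legendrian tangle whose crossings all have the same sign pattern. The blocks are then glued along the edges of the planar graph, with zig-zags added only where they are needed; these correspond to Thurston--Bennequin stabilizations, which are allowed because the framing is $tb-1$.

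Next I would apply the construction and find a basis of the fiber's $1$-handles organized by regions. Each block contributes $1$-handles whose attaching regions fuse into the boundary without raising the genus, since they form planar bands along one black region. Each white region contributes one pair of $1$-handles whose cores link, and this pair adds exactly one to the genus. I would then verify the Euler characteristic count: if $F$ has $h$ bands and $b$ boundary components, show that $h - b + 1 = 2w$, where $w$ is the number of white regions. This should follow from the Euler formula for the planar graph, $V - E + F = 2$, applied to the checkerboard graph. Allowability comes from each vanishing cycle being a non-separating curve; I would check this by exhibiting, for each one, a dual arc through its $1$-handle.

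I expect the main obstacle to be the extended blocks, where the replacement tangle is not itself alternating. There one has to check that the local Legendrian front can be chosen so that the construction adds no extra genus inside the block. To handle this, I would first try to show that each extended block is Legendrian isotopic to a front that is a stacked sequence of positive crossings between two strands. Such a front contributes only planar bands, like the torus-knot blocks mentioned in the abstract. I would then reduce to the alternating case of Theorem~\ref{thm:alternating_1} by an induction on the number of extended blocks.
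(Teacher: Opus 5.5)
Your frame is right: feed a front into the earlier construction and compute $g$ from $2-2g-\bn=1-\hn$ together with the Euler relation of the planar graph. But the step that carries the whole proof, determining $\hn$ and $\bn$ exactly, is missing, and your mechanism for the extended blocks points the wrong way.

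In the paper, after the grid-position deformation, $1$-handles go only on NW-corner vertical segments and on NE-corner vertical segments that cross horizontal segments. Three counts are then established:
\begin{itemize}
\item The NE-type handles number $\crs$, the crossing number of the \emph{corresponding alternating knot}. This holds because inside a crossing node each crossing box is a \emph{single} vertical segment passing over several horizontal segments, and it receives one handle.
\item The NW-type handles number $\wh+\hl$: one at the left edge of each white region, plus $\hl$ others.
\item $\bn=\gr+\hl$: one outer boundary per gray region, plus one hole inside a gray region for each of those $\hl$ extra NW segments.
\end{itemize}
Passing from the corresponding alternating knot to the extended one adds only segments of this last kind. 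Each adds one handle and one boundary component, and these cancel; with $\gr-\crs+\wh=1$ this gives $g=\wh$.

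Your claims that blocks contribute ``planar bands'' and that each white region contributes ``a pair of $1$-handles whose cores link'' are not derived from the construction. Nothing in your plan computes the number of boundary components.

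Your treatment of the nodes is also problematic. Rewriting a node with $n=\sum p_i$ strands as a positive braid, with each crossing between two strands, is the wrong move if every crossing then gets its own handle as in the alternating case. For $T_{3,4}$ (crossing number $8$) this gives $8$ crossing-type handles, whereas the paper's grid front has only $4$. You identify no compensating boundary components, and the Euler relation of the planar graph controls only the crossings of the corresponding alternating knot, not those of the extended diagram.

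Likewise, the ``induction on extended blocks'' reducing to Theorem~\ref{thm:alternating_1} has no content as stated. Replacing a node changes the knot and the $4$-manifold, so the comparison is possible only at the level of these handle and boundary counts.

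The second gap is the framing. The total space must be diffeomorphic to $X$, whose $2$-handle framing is $\mathrm{tb}-1$ for the \emph{given} Legendrian knot. So you are not free to insert zig-zags wherever the blockwise Legendrian realization needs them: each one lowers $\mathrm{tb}$ by one and changes the smooth framing.

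What you need is a front realizing the maximal $\mathrm{tb}$, which you do not check for your blockwise front. You then stabilize by exactly the deficit $d_k$, as the paper does with $d_k$ SE stabilizations at the SE corner in the last column (Section~\ref{sec:non_maximal_tb}). Moreover, any zig-zags you add create new cusps, hence new vertical segments and possibly new $1$-handles and boundary components. These must be included in the count of $\hn$ and $\bn$ before you can conclude $g=\wh$.
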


As direct corollaries of these theorems, we show that knot traces whose attaching circles are positive pretzel knots with $s$ rows admit PALFs with regular fibers of genus $s-1$. Similarly, those whose attaching circles are positive torus knots admit PALFs with regular fibers of genus $1$.

Furthermore, we consider \emph{positive torus-pretzel knots}, which are obtained by replacing each twist block of a positive pretzel knot with a block consisting of the crossings of a positive torus knot. We prove that knot traces whose attaching circles are positive torus-pretzel knots with $s$ rows also admit PALFs with regular fibers of genus $s-1$.

The remainder of this paper is organized as follows. Section~\ref{sec:preliminaries} reviews the relevant definitions and theoretical background, including the planar graphs corresponding to alternating knots. Section~\ref{sec:alternating} applies our geometric construction to knot traces of alternating knots, demonstrating that any such knot trace admits a PALF whose regular fiber genus is determined by the number of white regions in the corresponding planar graph. As an immediate application, we prove the genus formula for positive pretzel knots. Section~\ref{sec:extended} formally introduces extended alternating knots and establishes that any such knot trace similarly admits a PALF whose regular fiber genus is determined by the number of white regions in the corresponding planar graph. We also define positive torus-pretzel knots and establish the genus formula for positive torus-pretzel knots in this section. Finally, Section~\ref{sec:non_maximal_tb} details an additional modification method required for the specific scenario where the Thurston--Bennequin (tb) number of the Legendrian knot $\widetilde{C_{0k}}$ ($1 \leq k \leq m$) is strictly less than its maximal value.

\section*{Acknowledgments}
The author would like to express his sincere gratitude to Professor Hisaaki Endo for his invaluable guidance and continuous support. His emphasis on the concrete construction and study of $4$-manifolds profoundly deepened the author's appreciation of this rich field. The author also thanks his colleagues at the Institute of Science Tokyo for their helpful discussions and insightful comments. This work was supported by JST SPRING, Japan Grant Number JPMJSP2180.

\section{Preliminaries}\label{sec:preliminaries}

Throughout this paper, we use the term ``knot'' to include links as well. Unless otherwise specified, we assume that all knots are \emph{connected, prime, and represented by reduced diagrams (i.e., diagrams without nugatory crossings)}. In this section, we describe the properties of Legendrian knots when converted into grid position, and we formally define the classes of knots studied in this work.

In \cite{Tanaka2025construction} and \cite{Tanaka2026combinatorial}, we introduced a method for constructing a PALF $P$ that is diffeomorphic to a given Stein surface $\Pi$, based on its $2$-handlebody decomposition. Here, we focus on the case where the initial Stein surface $\Pi$ is a $2$-handlebody consisting of a single $0$-handle and $m$ $2$-handles ($m \geq 1$) attached along a Legendrian knot; namely, the case of a knot trace.

As detailed in Subsection 2.2 of \cite{Tanaka2025construction}, the procedure for converting any Legendrian knot in $(S^3, \xi_{st})$ into a knot in grid position proceeds as follows: first, we rotate the Legendrian knot $\widetilde{C_{0k}}$ ($1 \leq k \leq m$) clockwise by $45^\circ$. Next, we deform it into a knot $\widetilde{C_{0k}'}$ in grid position such that the left cusps map to northwest (NW) corners and the right cusps map to southeast (SE) corners. Consequently, the number of NW corners of the knot $\widetilde{C_{0k}'}$ in grid position corresponds exactly to the number of left cusps of the original Legendrian knot $\widetilde{C_{0k}}$.

The subsequent discussion is based on \cite{MR2500576} and \cite[Chapter 12]{MR3381987}.

\begin{rem}\label{rem:Legendre}
In this paper, we adopt the convention of \cite{MR2500576}. Readers consulting \cite{MR3381987} should note the difference highlighted in \cite{MR2500576}: ``Note that our convention differs from the convention of \cite{MR3381987}: the convention there is to reverse all crossings in the grid diagram and then rotate $45^\circ$ clockwise.''
\end{rem}

The following moves on a grid diagram preserve the Legendrian isotopy class of the corresponding Legendrian knot:

\begin{itemize}
\item \textbf{Translation.}
By viewing the grid diagram as being embedded on a torus---obtained by identifying the opposite sides of the grid---one can apply cyclic translations to the diagram. Any such translation can be expressed as a composition of elementary vertical and horizontal shifts. Specifically, a vertical translation cyclically permutes the rows (e.g., moving the top row to the bottom), while a horizontal translation cyclically permutes the columns (e.g., moving the leftmost column to the rightmost).

\item \textbf{Commutation.}
A commutation interchanges two adjacent rows (vertical commutation) or two adjacent columns (horizontal commutation), provided that the segments within the adjacent rows or columns are either strictly disjoint or nested.

\item \textbf{(De)stabilization.}
Stabilizations of types $X$:NE, $X$:SW, $O$:NE, and $O$:SW, along with their inverse destabilizations, preserve the Legendrian isotopy class of the knot.
\end{itemize}

As in \cite{Tanaka2025construction}, the orientation of the knots is irrelevant for our current purposes. Consequently, we omit the standard $X$ and $O$ markings from the grid diagrams, depicting only the vertical and horizontal segments (see Figure~\ref{fig:Y0201}). In particular, when performing an NE stabilization at an SW corner, or an SW stabilization at an NE corner, the writhe of the knot increases by one. However, since the number of NW corners (which correspond to left cusps) or SE corners (which correspond to right cusps) also increases by one, the Thurston--Bennequin number remains invariant.

In contrast, NW and SE stabilizations decrease the Thurston--Bennequin number by one. Therefore, applying these stabilizations yields a knot that is not Legendrian isotopic to the original one (see Figure~\ref{fig:Y0202}).

\begin{figure}[htbp]
\centering  
\begin{tikzpicture}
    \node[anchor=south west, inner sep=0] (image) at (0,0)  {\includegraphics[scale=0.6]{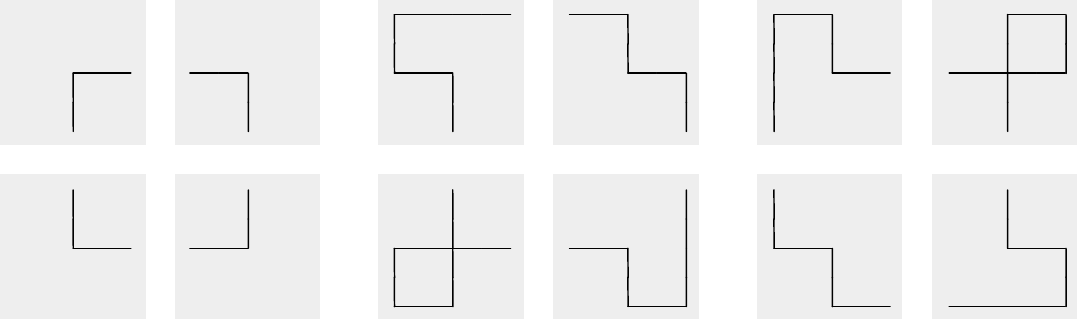}};
    \begin{scope}[x={(image.south east)},y={(image.north west)}]
        \node [below] at (0.15, -0.04) {(a)};
        \node [below] at (0.5, -0.04) {(b)};
        \node [below] at (0.85, -0.04) {(c)};
    \end{scope}
\end{tikzpicture}
\caption{Stabilizations preserving the Legendrian isotopy class: (a) original state; (b) after NE stabilization; (c) after SW stabilization.}
\label{fig:Y0201}
\end{figure}

\begin{figure}[htbp]
\centering  
\begin{tikzpicture}
    \node[anchor=south west, inner sep=0] (image) at (0,0)  {\includegraphics[scale=0.6]{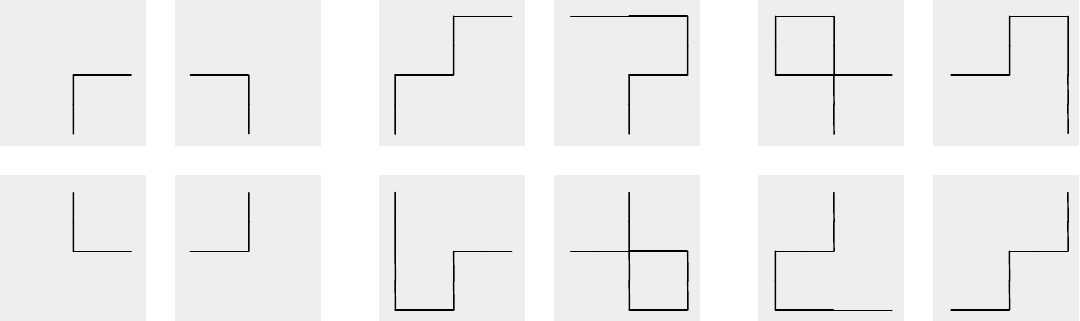}};
    \begin{scope}[x={(image.south east)},y={(image.north west)}]
        \node [below] at (0.15, -0.04) {(a)};
        \node [below] at (0.5, -0.04) {(b)};
        \node [below] at (0.85, -0.04) {(c)};
    \end{scope}
\end{tikzpicture}
\caption{Stabilizations changing the Legendrian isotopy class: (a) original state; (b) after NW stabilization; (c) after SE stabilization.}
\label{fig:Y0202}
\end{figure}

In Section~\ref{sec:alternating}, we consider the case where the attaching circles of the $2$-handles form an alternating knot. The correspondence between knots and planar graphs described below follows the approach of \cite{MR2079925}.

Suppose a knot in grid position is given on the plane. Assigning white to the unbounded outer region, we color the bounded complementary regions of the knot such that the entire plane, including the outer region, alternates between gray and white. (Note that the unbounded outer region is not included in the total count of white regions.)

We then place a vertex at the center of each gray region and connect two vertices with an edge if their corresponding regions share a crossing. The resulting planar graph is thus uniquely associated with the given knot in grid position. Consequently, there is a natural one-to-one correspondence between the following: the gray regions and the vertices of the graph, the white regions and the faces bounded by the edges, and the crossings of the knot and the edges of the graph.

In particular, when the knot is alternating, only the following two consistent coloring patterns can occur at the crossings:

\begin{itemize}
\item[\textbf{(a)}] In the neighborhood of every crossing, the second (upper-left) and fourth (lower-right) quadrants are shaded gray (see Figure~\ref{fig:Y0203}(a)).
\item[\textbf{(b)}] In the neighborhood of every crossing, the first (upper-right) and third (lower-left) quadrants are shaded gray (see Figure~\ref{fig:Y0203}(b)).
\end{itemize}

Figures~\ref{fig:Y0204}(a) and (b) illustrate examples of the right-handed trefoil knot in grid position corresponding to types (a) and (b), respectively.

\begin{figure}[htbp]
\centering  
\begin{tikzpicture}
    \node[anchor=south west, inner sep=0] (image) at (0,0)  {\includegraphics[scale=0.8]{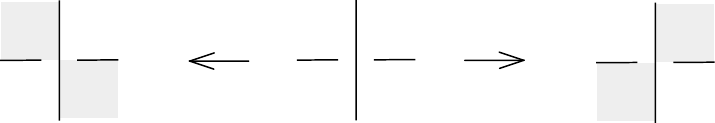} };
    \begin{scope}[x={(image.south east)},y={(image.north west)}]
        \node [below] at (0.09, -0.02) {(a)};
        \node [below] at (0.91, -0.02) {(b)};
    \end{scope}
\end{tikzpicture}
\caption{Checkerboard coloring of the knot diagram.}
\label{fig:Y0203}
\end{figure}

\begin{figure}[htbp]
\centering  
\begin{tikzpicture}
    \node[anchor=south west, inner sep=0] (image) at (0,0)  {\includegraphics[scale=0.6]{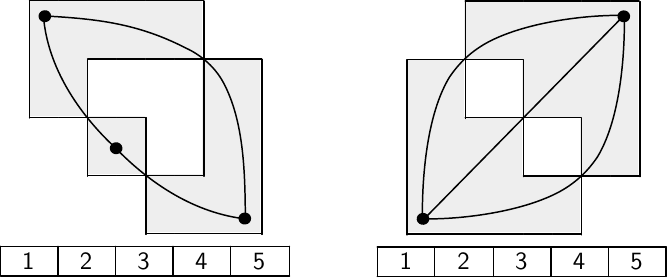}};
    \begin{scope}[x={(image.south east)},y={(image.north west)}]
        \node [below] at (0.215, -0.02) {(a)};
        \node [below] at (0.785, -0.02) {(b)};
    \end{scope}
\end{tikzpicture}
\caption{Diagrams of the right-handed trefoil knot of types (a) and (b).}
\label{fig:Y0204}
\end{figure}

In this paper, we restrict our attention to alternating knots in grid position of type (a). Furthermore, to ensure a simplified construction, we only consider knots that satisfy the following condition, referred to as Condition~$\mathrm{M}$. (The formal definition of a Mondrian diagram and the specific geometric rationale for taking the mirror image will be detailed later.)

\begin{itemize}
\item \textbf{Condition $\mathrm{M}$:} There exists an alternating knot in grid position derived from a Mondrian diagram that possesses exactly one local maximum segment and exactly one local minimum segment, such that its mirror image coincides exactly with the given knot in grid position.
\end{itemize}

Given a connected, prime, and reduced alternating knot diagram, one can construct---via a Mondrian diagram---a knot in grid position that satisfies Condition~$\mathrm{M}$ and realizes the maximal Thurston--Bennequin number for that knot type.

Suppose we are given a knot $\widetilde{C_{0k}'}$ ($1 \leq k \leq m$) in grid position, obtained by converting a Legendrian knot $\widetilde{C_{0k}}$ in $(S^3, \xi_{st})$. In Sections~\ref{sec:alternating} and \ref{sec:extended}, provided that $\widetilde{C_{0k}'}$ satisfies Condition~$\mathrm{M}$, we restrict our focus to the case where the Thurston--Bennequin (tb) number of $\widetilde{C_{0k}'}$ (or equivalently, that of the original $\widetilde{C_{0k}}$) is maximal. Under this assumption, the construction detailed below can be applied without modification. The scenario where the tb number is strictly less than the maximal value will be addressed in Section~\ref{sec:non_maximal_tb}. In that section, we will introduce an additional operation to compensate for the deficit from the maximal tb number, thereby adapting the current argument to the general case.

The following procedure is based on the method presented in \cite[Section 3]{MR2186113}, modified to suit the specific purposes of this paper. The original procedure in \cite[Section 3]{MR2186113} always yields an alternating knot in grid position of type (b). To obtain a knot of type (a) instead, we introduce Steps (1) and (6) to the algorithm: we first take the mirror image of the initial knot diagram, apply the core transformations, and finally take the mirror image of the resulting grid diagram. 

As a concrete example, Figure~\ref{fig:Y0205} illustrates the sequence of steps to construct a type (a) grid position for the right-handed trefoil knot. The subfigures (1) through (6) display the results of applying operations (1) to (6) sequentially to the initial knot diagram (0).

\begin{itemize} 
\item[\textbf{(1)}] Construct a knot diagram for the mirror image $\bar{K}$ of a given alternating knot $K$.
\item[\textbf{(2)}] Construct the associated planar graph from the alternating knot diagram of $\bar{K}$. Color the complementary regions alternately gray and white such that the coloring pattern around each crossing satisfies the condition of type (b), as depicted in Figure~\ref{fig:Y0203}(b).
\item[\textbf{(3)}] Transform this planar graph into a Mondrian diagram, where the vertices correspond to horizontal segments and the edges correspond to vertical segments, ensuring that the resulting diagram possesses exactly one local maximum segment and exactly one local minimum segment.
\end{itemize} 

\begin{dfn} \label{dfn:Mondrian}
A \emph{Mondrian diagram} is a planar diagram consisting of mutually disjoint horizontal segments and vertical segments. The endpoints of each vertical segment must lie on distinct horizontal segments, and the interior of any vertical segment must not intersect any other horizontal or vertical segments in the plane. In a Mondrian diagram, a \emph{local maximum segment} is a horizontal segment that has no vertical segments attached from above. Similarly, a \emph{local minimum segment} is a horizontal segment that has no vertical segments attached from below.
\end{dfn} 

\begin{itemize} 
\item[\textbf{(4)}] We can now convert this Mondrian diagram into a front projection as follows: replace each horizontal segment with a ``pair of lips'' front for the unknot, delete from these fronts a small neighborhood around each intersection with a vertical segment, and then replace each vertical segment with a crossing. This front projection represents a Legendrian knot corresponding to the given alternating knot diagram. 
\item[\textbf{(5)}] Apply the previously described conversion procedure to transform the front projection of the Legendrian knot into a knot $\bar{K}$ in grid position. In the resulting grid diagram, 
\item[\textbf{(6)}] Rotate the entire grid diagram counterclockwise by $90^\circ$. This geometric operation corresponds to taking the mirror image, thereby yielding the desired knot $K$ in grid position of type (a). Consequently, the neighborhoods of the upper-left (NW) and lower-right (SE) corners of the grid diagram are now shaded gray. Finally, we refer to the leftmost vertex of the associated planar graph as the \emph{starting point} and the rightmost vertex as the \emph{terminal point}.

Note the following crucial property for performing our simplified construction: when the edges of the planar graph are oriented in the direction from the starting point toward the terminal point, any edge exiting a gray region strictly proceeds from the second (upper-left) quadrant to the fourth (lower-right) quadrant. (An edge never travels backward from the fourth quadrant to the second quadrant.)
\end{itemize}

\begin{figure}[htbp]
\centering  
\begin{tikzpicture}
    \node[anchor=south west, inner sep=0] (image) at (0,0)  {\includegraphics[scale=0.73]{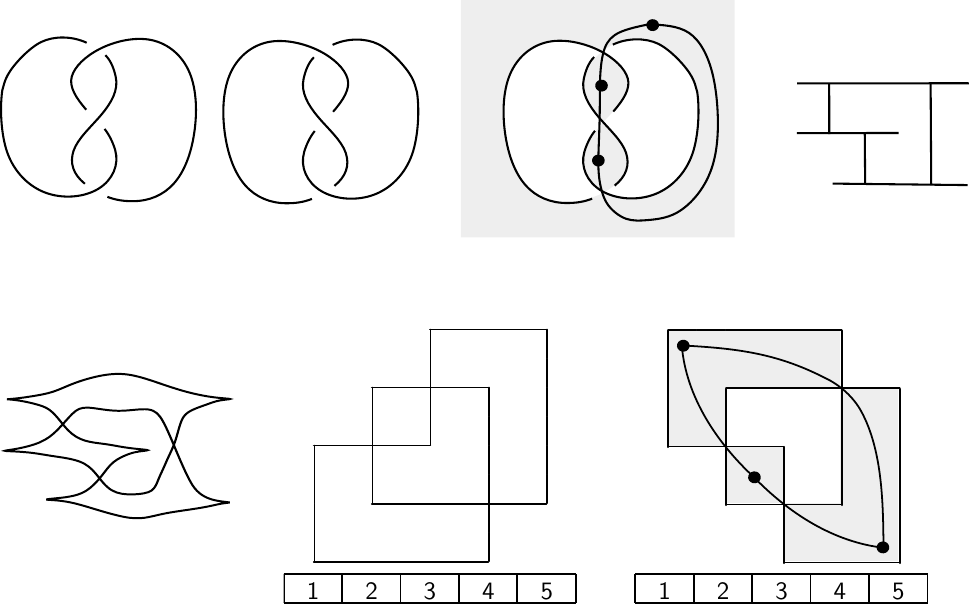} };
    \begin{scope}[x={(image.south east)},y={(image.north west)}]
        \node [below] at (0.09, 0.56) {(0)};
        \node [below] at (0.33, 0.57) {(1)};
        \node [below] at (0.63, 0.58) {(2)};
        \node [below] at (0.93, 0.59) {(3)};
        \node [below] at (0.11, -0.02) {(4)};
        \node [below] at (0.45, -0.02) {(5)};
        \node [below] at (0.81, -0.02) {(6)};
    \end{scope}
\end{tikzpicture}
\caption{The procedure for constructing an alternating knot in grid position that satisfies Condition~$\mathrm{M}$.}
\label{fig:Y0205}
\end{figure}

Furthermore, in Subsection~\ref{subsec:alternating_application}, we specifically examine positive pretzel knots.

\begin{dfn}\label{def:pretzel}
A \emph{positive pretzel knot}, denoted $P(q_1, q_2, \ldots, q_s)$, is a knot of the form illustrated in Figure~\ref{fig:Y0206}. Here, the number of rows $s$ satisfies $s \geq 3$, and each row contains $q_k$ right-handed half-twists, with $q_k \geq 1$ for all $1 \leq k \leq s$.
\end{dfn}

\begin{figure}[htbp]
\centering
\includegraphics[scale=0.8]{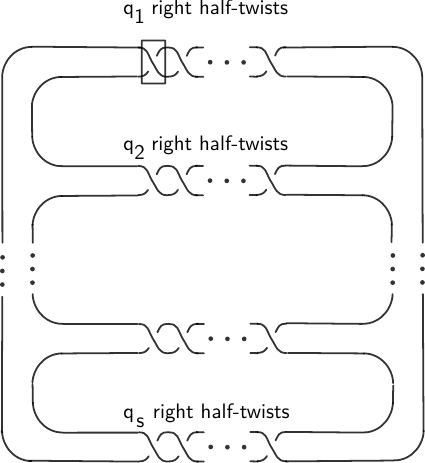}
\caption{A diagram of the positive pretzel knot $P(q_1, q_2, \dots, q_s)$.}
\label{fig:Y0206}
\end{figure}

Subsequently, in Subsection~\ref{subsec:extended_application}, we extend our analysis to positive torus knots.

\begin{dfn}\label{def:torus}
A \emph{positive torus knot}, denoted $T_{p,q}$, is a knot of the form depicted in Figure~\ref{fig:Y0207}. It is defined such that the number of strands $p$ satisfies $p > 1$, and the number of crossing boxes $q$ satisfies $q > 1$. Note that, in general, $T_{p,q}$ consists of $\gcd(p, q)$ distinct components.
\end{dfn}

\begin{figure}[htbp]
\centering
\includegraphics[scale=0.65]{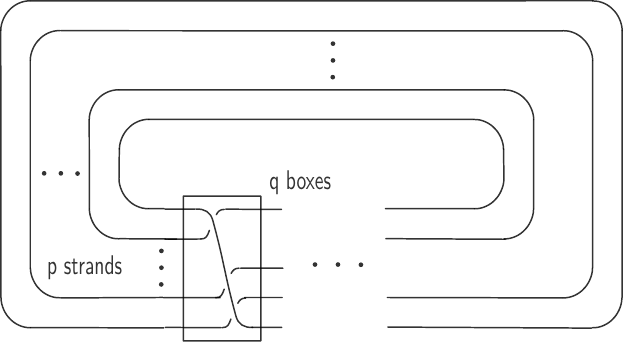}
\caption{A diagram of the positive torus knot $T_{p,q}$.}
\label{fig:Y0207}
\end{figure}

\section{Lefschetz fibrations on knot traces of alternating knots}\label{sec:alternating}

\subsection{PALF construction and the main theorem}\label{subsec:alternating_main}

We now state the main theorem of this section together with its underlying assumptions. Let the initial Stein surface $\Pi$ be a $2$-handlebody consisting of a single $0$-handle and $m$ $2$-handles ($m \geq 1$) attached along a Legendrian knot whose topological type is an alternating knot. Following the procedure outlined in Section~\ref{sec:preliminaries}, we convert each Legendrian knot $\widetilde{C_{0k}}$ ($1 \leq k \leq m$) in $(S^3, \xi_{st})$ into a corresponding knot $\widetilde{C_{0k}'}$ in grid position. For the subsequent discussion, we restrict our attention to the case where Condition~$\mathrm{M}$ (introduced in Section~\ref{sec:preliminaries}) is satisfied. Under these premises, we obtain the following theorem.

\begin{thm}\label{thm:alternating}
Let $X$ be a Stein surface consisting of a single $0$-handle and $m$ $2$-handles ($m \geq 1$) attached along a Legendrian knot whose topological type is an alternating knot satisfying Condition~$\mathrm{M}$. 
Consider the planar graph associated with this alternating knot. Then, there exists a positive allowable Lefschetz fibration (PALF) whose total space is diffeomorphic to $X$, and whose regular fiber has a genus exactly equal to the number of white regions in the corresponding planar graph.
\end{thm}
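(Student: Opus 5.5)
We will carry out the construction of \cite{Tanaka2025construction, Tanaka2026combinatorial} directly on the grid diagram $\widetilde{C_{0k}'}$ produced by Steps (1)--(6), and track how the genus of the fiber arises. In that construction, one starts from a disk (or a planar surface) built from the grid diagram. Each NW corner, equivalently each left cusp of $\widetilde{C_{0k}}$, contributes a $1$-handle to the regular fiber $F$. The knot is then realized, after the handle moves of \cite{Tanaka2025construction}, as a product of positive Dehn twists along curves on $F$: one curve for the attaching circle of each $2$-handle, together with auxiliary vanishing cycles that cancel the added $1$-handles. Since the tb number is assumed maximal here, no compensating stabilizations are needed (those are deferred to Section~\ref{sec:non_maximal_tb}). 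Thus the total space is diffeomorphic to $X$ by the general theorem of our previous work, and allowability holds because every vanishing cycle is homologically nontrivial by construction. The only new content is therefore the genus count.

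For the genus, we compute $\chi(F)$ and the number of boundary components $b$ of the fiber surface obtained from the grid diagram. The fiber is a ribbon-like surface: horizontal grid segments give bands, and crossings give twisted connections. We would show that under Condition~$\mathrm{M}$ and type (a), each gray region corresponds to a planar piece of the fiber (a vertex of the planar graph). Each crossing (edge of the graph) attaches a band joining two such pieces. This gives $\chi(F) = V - E$ up to the correction coming from the cusps, which is arranged to cancel exactly against the NW corners paired with the vertices.

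The key claim is that the boundary components of $F$ are in bijection with the faces of the planar graph including the outer one, i.e.\ $b = W+1$, where $W$ is the number of bounded white regions. We would prove this using the crucial property noted in Step (6): oriented from the starting point to the terminal point, every edge leaves a gray region from the second quadrant into the fourth. Because of this, tracing a boundary curve of $F$ goes once around a white region and never crosses into another. The single local maximum and single local minimum segments ensure that the outer white region contributes exactly one boundary component. Then $2-2g-b=\chi(F)=V-E$ together with Euler's formula $V-E+(W+1)=2$ gives $2g = 2 - b - V + E = 2 - (W+1) - (2-W-1) \cdot 1$. This must be rearranged carefully, and the count must come out to $g=W$ once the extra boundary components coming from the cusps/corners are accounted for. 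I expect this bookkeeping to be the main obstacle: determining exactly how many boundary components the NW/SE corner handles contribute, so that the final genus is precisely $W$ and not off by a term. I would first verify it on the trefoil of Figure~\ref{fig:Y0204}(a), where $W=1$ and the expected fiber is a genus-one surface, and then argue in general by induction on the number of edges. The induction proceeds by deleting an edge adjacent to the terminal point and checking that the fiber changes by one handle and one boundary component as a white region is merged.
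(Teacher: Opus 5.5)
Your proposal has a genuine gap at its central step. The model of the fiber you describe cannot give genus $W$, and it does not match the surface the construction actually produces.

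Suppose the fiber were one planar piece per gray region joined by one band per crossing, so that $\chi(F)=V-E$, with boundary components in bijection with the white faces including the outer one, so that $b=W+1$. Then Euler's formula $V-E+W=1$ gives $\chi(F)=1-W=2-b$, i.e.\ $g=0$. This is why your arithmetic does not close: the unspecified ``correction coming from the cusps'' is not a bookkeeping term but the entire genus.

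Both inputs are also wrong for the actual fiber. The trefoil of Figure~\ref{fig:Y0302} has three gray regions, three crossings and one white region. Its fiber has five $1$-handles, hence $\chi(F)=-4$, genus $1$ and four boundary components. Your model predicts $\chi(F)=0$ and $b=2$, an annulus, so the trefoil check you plan would already fail.

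What is missing is the correct classification of the $1$-handles and boundary components. This is done for the diagram $\widetilde{C_{0k}''}$, in which every vertical segment crossing a horizontal segment has been deformed into a two-unit segment with an NE and an SW corner. The $1$-handles are:
\begin{itemize}
\item one for the NW-corner segment at the left edge of each bounded white region ($\wh$ of them);
\item one for each NE-corner segment crossing a horizontal segment, i.e.\ one per crossing ($\crs$ of them);
\item $\hl$ further NW-corner handles, each of which leaves a hole inside a gray region once the $0$-handle boundary is pushed to the right.
\end{itemize}
The boundary components are indexed by gray regions, not by white faces: one outer boundary per gray region plus one per hole, so $\bn=\gr+\hl$. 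With $\hn=\wh+\hl+\crs$, the relations $2-2g-\bn=1-\hn$ and $\gr-\crs+\wh=1$ give $g=\wh$ directly, with the holes cancelling. No induction on edges is needed, and it is unclear that deleting an edge keeps you among reduced diagrams satisfying Condition~M.

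The real work is verifying these two counts. One must show that the $0$-handle boundary can be brought to the left edge of every white region, sliding under already attached $1$-handles, without creating extra boundary components. This is where Condition~M and the quadrant property noted in Step~(6) enter. The paper checks it separately for the gray region containing the starting point, the intermediate gray regions, and the one containing the terminal point.

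Finally, the NE-corner step is modified relative to the earlier papers (Figure~\ref{fig:Y0305}). So the diffeomorphism with $X$ needs that modified Kirby move, not a bare citation of the previous theorem.
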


In this section, each vertical segment of the knot $\widetilde{C_{0k}'}$ ($1 \leq k \leq m$) in grid position that crosses a horizontal segment is deformed into a specific arc. This resulting arc contains a vertical segment of exactly two lattice units in height, bounded by an NE corner and an SW corner (see Figure~\ref{fig:Y0301}). We denote the knot in grid position obtained after this deformation by $\widetilde{C_{0k}''}$ ($1 \leq k \leq m$). This deformation is achieved through a sequence of stabilizations and commutations that preserve the Legendrian isotopy class of the knot, as detailed in Section~\ref{sec:preliminaries}:

\begin{itemize}
\item If the upper endpoint of a vertical segment is an NW corner (respectively, an NE corner), we perform an NE stabilization at that corner (if necessary) and subsequently move the horizontal segment downward via commutations (Figure~\ref{fig:Y0301} (i)--(iii)).
\item If the lower endpoint of a vertical segment is an SE corner (respectively, an SW corner), we perform an SW stabilization at that corner (if necessary) and subsequently move the horizontal segment upward via commutations (Figure~\ref{fig:Y0301} (iv)--(v)).
\end{itemize}

\begin{figure}[htbp]
\centering  
\begin{tikzpicture}
    \node[anchor=south west, inner sep=0] (image) at (0,0)  {\includegraphics[scale=0.6]{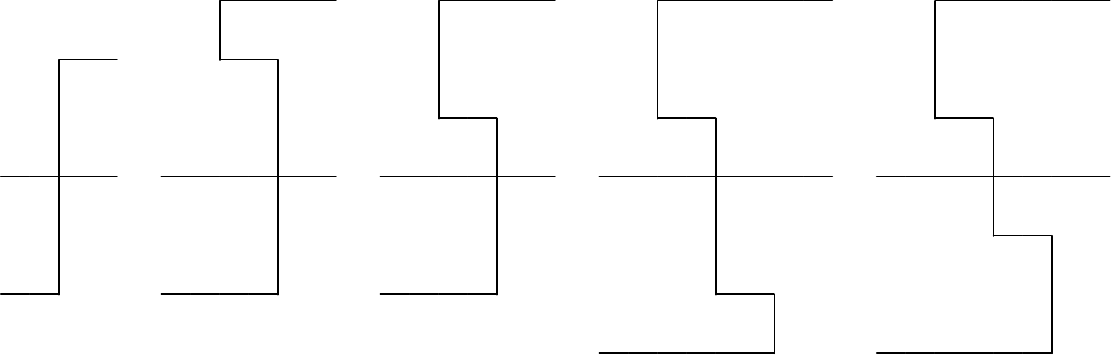}};
    \begin{scope}[x={(image.south east)},y={(image.north west)}]
        \node [below] at (0.05, -0.04) {(i)};
        \node [below] at (0.22, -0.04) {(ii)};
        \node [below] at (0.43, -0.04) {(iii)};
        \node [below] at (0.65, -0.04) {(iv)};
        \node [below] at (0.89, -0.04) {(v)};
    \end{scope}
\end{tikzpicture}
\caption{Deformation of a vertical segment intersecting a horizontal segment.}
\label{fig:Y0301}
\end{figure}

\begin{figure}[htbp]
\centering  
\begin{tikzpicture}
    \node[anchor=south west, inner sep=0] (image) at (0,0)  {\includegraphics[scale=0.65]{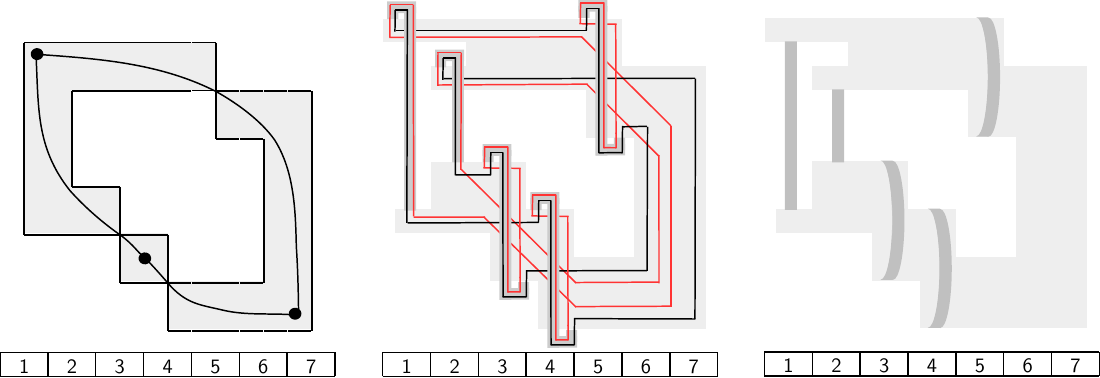} };
    \begin{scope}[x={(image.south east)},y={(image.north west)}]
        \node [below] at (0.15, -0.02) {(a)};
        \node [below] at (0.5, -0.02) {(b)};
        \node [below] at (0.85, -0.02) {(c)};
        
        \node[font=\scriptsize] at (0.352, 0.8) {$C_0$};
        \node at (0.05, 0.425) {\footnotesize{G1}};
        \node at (0.133, 0.425) {\footnotesize{G2}};
        \node at (0.26, 0.703) {\footnotesize{G3}};
        \node at (0.16, 0.54) {\footnotesize{W1}};
    \end{scope}
\end{tikzpicture}
\caption{The trefoil knot example: (b) Monodromy factorization: $(C_0, C_5, C_4, C_3, C_2, C_1)$. Each $C_i$ ($1 \leq i \leq 5$) denotes a red simple closed curve passing over the $1$-handle in the $i$-th column.}
\label{fig:Y0302}
\end{figure}

As a first concrete example, applying the aforementioned deformation to the diagram of the trefoil knot shown in Figure~\ref{fig:Y0204}(a) yields the diagram in Figure~\ref{fig:Y0302}(a).

Regarding the PALF construction, this paper extends the method introduced in \cite{Tanaka2026combinatorial} as follows (see Figure~\ref{fig:Y0302}(b)):
In \cite{Tanaka2026combinatorial}, the procedure applied to a vertical segment with an NE corner was illustrated in Section~2, Figure~5(b) of that paper, with the corresponding Kirby move shown in Section~2, Figure~7. This specific Kirby move leaves the framing of the vertical segment with the NE corner unchanged. Furthermore, the attaching circle corresponding to the newly added simple closed curve has a writhe of $-1$ and a framing of $-2$.

In the present work, we apply a modified procedure to vertical segments with NE corners, as illustrated in Figure~\ref{fig:Y0305}(a). The corresponding Kirby move, depicted in Figure~\ref{fig:Y0305}(b), is a modification of the one shown in Section~2, Figure~7 of \cite{Tanaka2026combinatorial}. Crucially, the framing of the vertical segment with the NE corner remains unchanged, and the attaching circle corresponding to the newly added simple closed curve again has a writhe of $-1$ and a framing of $-2$. Consequently, just as in \cite{Tanaka2026combinatorial}, the total space of the PALF constructed via this updated procedure remains diffeomorphic to the original Stein surface.

Finally, Figure~\ref{fig:Y0302}(c) illustrates the regular fiber, which has been deformed to clearly display its boundary components. This regular fiber is a surface of genus $1$ with $4$ boundary components.

\begin{figure}[htbp]
\centering  
\begin{tikzpicture}
    \node[anchor=south west, inner sep=0] (image) at (0,0)  {\includegraphics[scale=0.7]{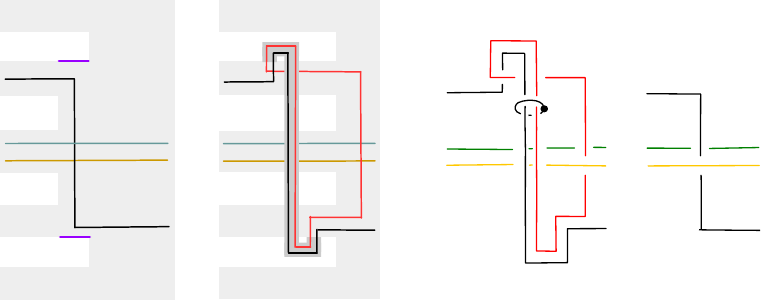}};
    \begin{scope}[x={(image.south east)},y={(image.north west)}]
        \node [below] at (0.255, -0.04) {(a)};
        \node [below] at (0.815, -0.04) {(b)};
    \end{scope}
\end{tikzpicture}
\caption{Handling a vertical segment with an NE corner in the PALF construction: (a) the new procedure; (b) the corresponding Kirby move.}
\label{fig:Y0305}
\end{figure}

As a second concrete example, the diagram obtained by applying the aforementioned deformation is shown in Figure~\ref{fig:Y0303}. The knot $\widetilde{C_{0}''}$ in grid position has a writhe of $-3$, five left cusps, a Thurston--Bennequin number of $-8$, and a framing of $-9$. Its topological type corresponds to the knot $7_6$ in the Rolfsen knot table.

Following the coloring rules described in Section~\ref{sec:preliminaries}, the appropriate regions are shaded gray, and the associated planar graph is superimposed. There are four gray regions, denoted $\G{1}$, $\G{2}$, $\G{3}$, and $\G{4}$, and four white regions, denoted $\W{1}$, $\W{2}$, $\W{3}$, and $\W{4}$.

The guide line $B_0$ is a copy of $\widetilde{C_0''}$ drawn on the $0$-handle $D^2$, depicted as a gray square (see Figure~\ref{fig:Y0304}). We then apply the PALF construction procedure detailed below. Although the full details will be explained later, the regular fiber and vanishing cycles of the resulting PALF $P$ are illustrated in Figure~\ref{fig:Y0306}.

Throughout this paper, we let $C_0$ (or $C_{0k}$) denote the simple closed curves corresponding to the vanishing cycles after the construction. Furthermore, we let $C_i$ ($i \geq 1$) denote the red simple closed curves (vanishing cycles) that pass over the $1$-handles added in the $i$-th column during the construction process.

\begin{figure}[htbp]
  \begin{minipage}[t]{0.49\textwidth}
    \centering
         \begin{tikzpicture}
            \node[anchor=south west, inner sep=0] (image) at (0,0)    {\includegraphics[scale=0.32]{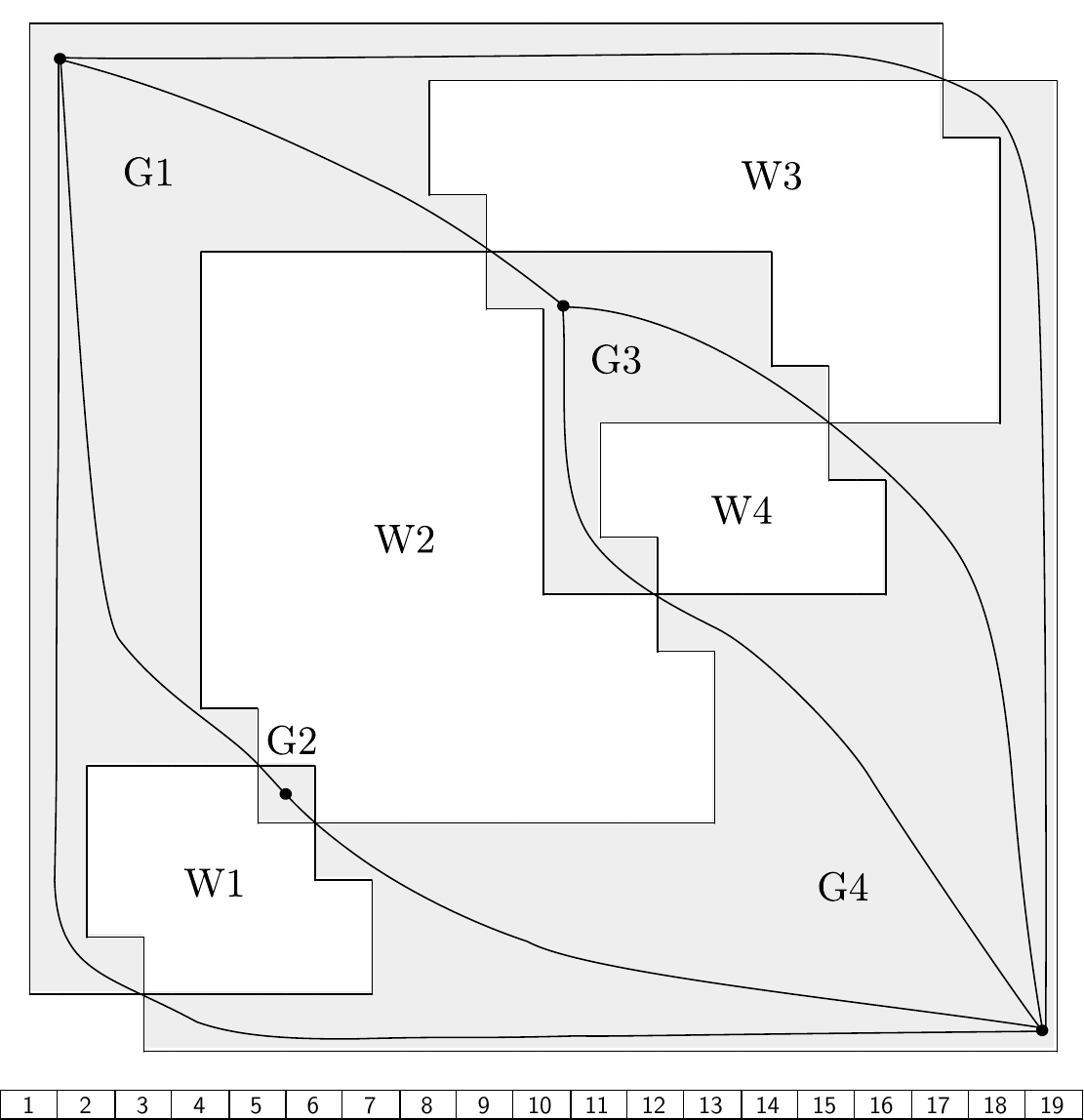}};
            \begin{scope}[x={(image.south east)},y={(image.north west)}]
            \node[font=\scriptsize] at (0, 0.87) {$\widetilde{C_0''}$};
            \end{scope}
         \end{tikzpicture}
    \caption{An alternating knot diagram and its associated planar graph.}
    \label{fig:Y0303}
  \end{minipage}
  \hfill
  \begin{minipage}[t]{0.49\textwidth}
    \centering
       \begin{tikzpicture}
            \node[anchor=south west, inner sep=0] (image) at (0,0)    {\includegraphics[scale=0.32]{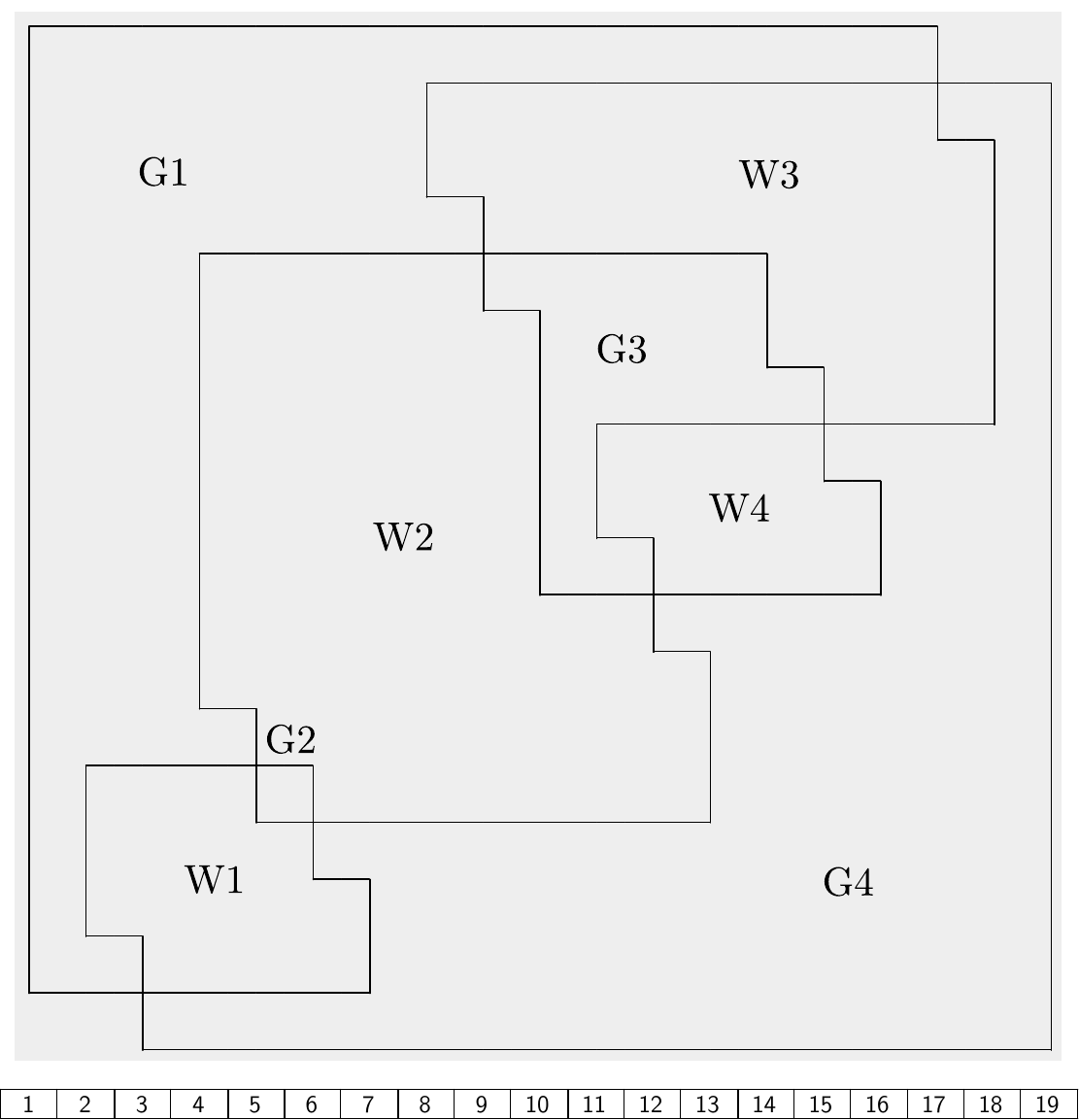}};
            \begin{scope}[x={(image.south east)},y={(image.north west)}]
            \node[font=\scriptsize] at (0, 0.87) {$B_0$};
            \end{scope}
         \end{tikzpicture}
    \caption{The closed curve $B_0$ on the surface of the $0$-handle.}
    \label{fig:Y0304}
  \end{minipage}
\end{figure}

Our analysis proceeds in three steps: first, we construct the PALF by classifying the vertical segments and their corresponding $1$-handles; next, we analyze the boundary components of the regular fiber; and finally, we compute its genus.

The vertical segments of $B_0$, along with the $1$-handles attached to the boundary of the $0$-handle to lift them, can be classified into four distinct types:

\begin{description}
\item[Case (1)]  \textbf{Vertical segments with NW corners located at the left edges of white regions.}  
Each white region contains exactly one vertical segment with an NW corner at its left edge. 
In our running example, these segments appear at the left edges of the four white regions $\W{1}$, $\W{2}$, $\W{3}$, and $\W{4}$, specifically located in columns 2, 4, 8, and 11. Each of these segments is lifted by attaching a $1$-handle to the boundary of the $0$-handle. Subsequently, an isotopy is applied to deform the $0$-handle, effectively pushing it out of the white regions except in the immediate neighborhoods of the simple closed curves.

\item[Case (2)] \textbf{Vertical segments with NW corners other than those in case (1).}  
In our running example, one such segment appears at the left edge of $\G{1}$ in column~1. After attaching a $1$-handle in column~1, we apply an isotopy to push the boundary of the $0$-handle to the right, thereby forming a hole strictly inside the region $\G{1}$. 
\end{description}

For all vertical segments featuring an NW corner (i.e., those categorized under cases (1) and (2)), we apply the procedure illustrated in Section~2, Figure~5(a) of \cite{Tanaka2026combinatorial}. It is worth noting that none of the vertical segments with NW corners intersect any horizontal segments.

\begin{description}
\item[Case (3)] \textbf{Vertical segments with NE corners that cross horizontal segments.}  
These segments are precisely the vertical segments with NE corners generated by the deformation described earlier (see Figure~\ref{fig:Y0301}), wherein each vertical segment of the knot $\widetilde{C_{0k}'}$ ($1 \leq k \leq m$) in grid position that crosses a horizontal segment is deformed into a specific arc. Geometrically, each such vertical segment lies along the boundary separating two distinct gray regions. In our specific example, they appear in columns~3, 5, 6, 9, 12, 15, and~17. For instance, the vertical segment in column~3 separates $\G{1}$ and $\G{4}$. 

For every vertical segment of this type, the region immediately above its upper endpoint and the region immediately below its lower endpoint each correspond to either a white region or the unbounded outer region. Returning to the vertical segment in column~3, the region above its upper endpoint is $\W{1}$, while the region below its lower endpoint is the unbounded outer region.

After each of these vertical segments is lifted by attaching a $1$-handle, we push the boundary of the $0$-handle to the right---passing it underneath the newly attached $1$-handle---only if necessary. To illustrate, at the vertical segment in column~3, the boundary of the $0$-handle is \emph{not} pushed underneath the $1$-handle. Conversely, at the vertical segment in column~9, the boundary is indeed pushed to the right, passing underneath the $1$-handle (the precise conditions for executing this operation will be detailed later).

\item[Case (4)] \textbf{Vertical segments with NE corners that do not cross horizontal segments.}  
No $1$-handles are attached to these segments.
\end{description}

\begin{figure}[htbp]
\centering  
\begin{tikzpicture}
    \node[anchor=south west, inner sep=0] (image) at (0,0)  {\includegraphics[scale=0.60]{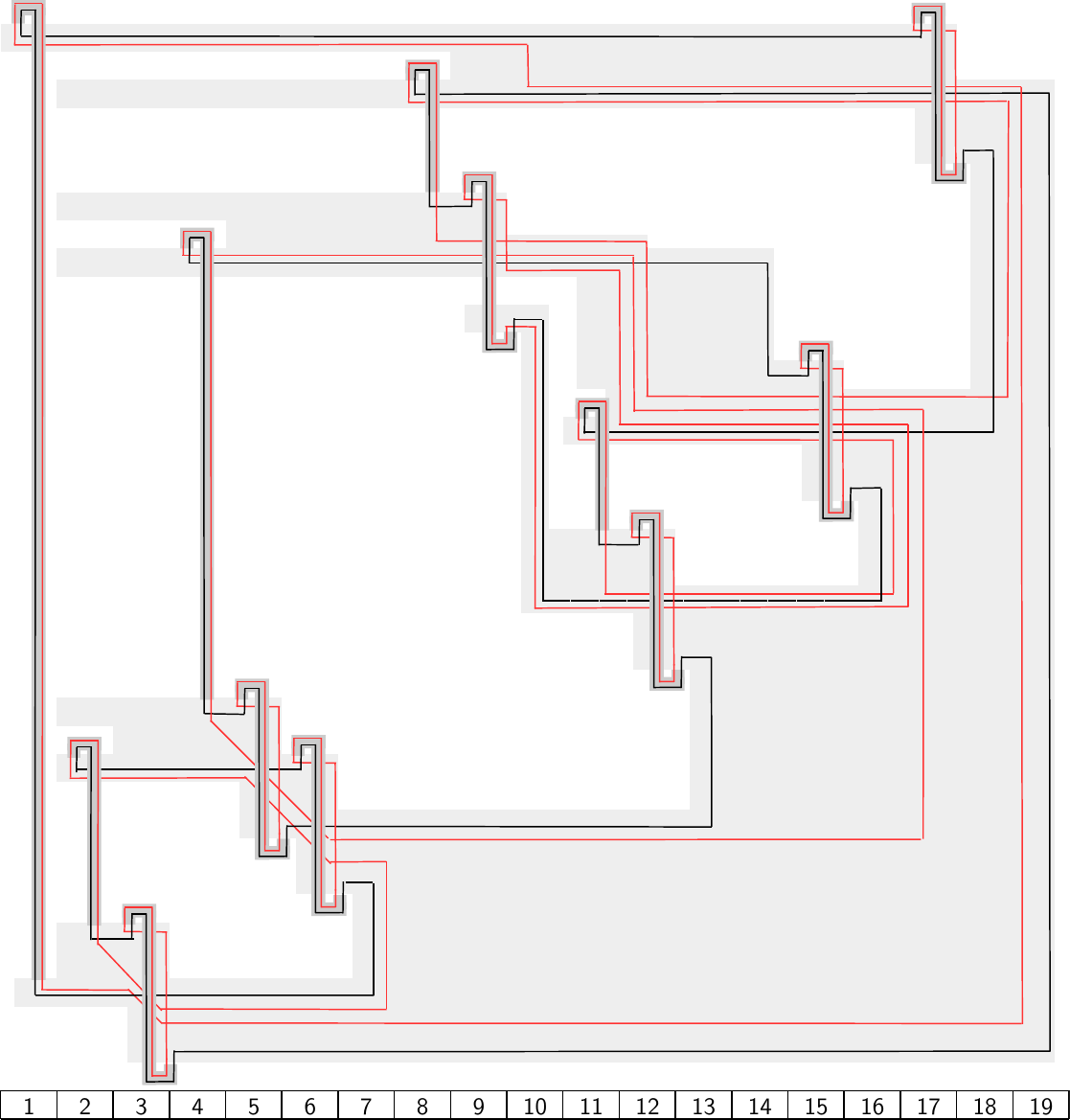}};
    \begin{scope}[x={(image.south east)},y={(image.north west)}]
        \node at (0.005, 0.86) {$C_0$};
        \node at (0.14, 0.86) {G1};
        \node at (0.29, 0.36) {G2};
        \node at (0.66, 0.7) {G3};
        \node at (0.8, 0.16) {G4};
        \node at (0.18, 0.22) {W1};
        \node at (0.35, 0.53) {W2};
        \node at (0.78, 0.82) {W3};
        \node at (0.7, 0.53) {W4};
    \end{scope}
\end{tikzpicture}
\caption{Monodromy factorization:
$(C_0, C_{17}, C_{15}, C_{12}, C_{11}, C_9, C_8, C_6,  C_5, $
$C_4, C_3, C_2, C_1)$.
Each $C_i$ ($1 \leq i \leq 17$) denotes a red simple closed curve passing over the $1$-handle in the $i$-th column.}
\label{fig:Y0306}
\end{figure}

In the PALF construction process, it is a crucial requirement that the neighborhood immediately to the left of each vertical segment with an NW corner becomes part of the boundary of the $0$-handle. Returning to our specific example, at the left edge of the gray region $\G{1}$ in column~1, we attach a $1$-handle to the $0$-handle. This operation allows us to lift the vertical segment of the closed curve $B_0$ located in that column, after which we apply an isotopy to push the boundary of the $0$-handle to the right.

As a direct result of this isotopy, the neighborhoods immediately to the left of the vertical segments situated at the left edges of the white regions $\W{1}$, $\W{2}$, and $\W{3}$ (specifically in columns~2, 4, and~8, respectively) properly become part of the boundary of the $0$-handle. For each of these locations, we attach a $1$-handle to the boundary of the $0$-handle and lift the vertical segment in the corresponding column. We then deform the $0$-handle, effectively pushing it out of these white regions, except in the immediate vicinity of the simple closed curves.

Now, consider the vertical segments with NE corners that cross horizontal segments. If both the upper and lower endpoints of such a segment lie within $\W{1}$, $\W{2}$, $\W{3}$, or the unbounded outer region---as is the case for the vertical segments in columns~3, 5, 6, 9, and~17---we can readily attach $1$-handles to the adjacent boundary of the $0$-handle to lift them.

Next, we must ensure that the neighborhood to the left of the vertical segment in column~11, located at the left edge of $\W{4}$, also becomes part of the boundary of the $0$-handle.  Because the vertical segment in column~9 has already been lifted via a $1$-handle, we achieve this by first pushing the boundary of the $0$-handle to the right---sliding it underneath this pre-existing $1$-handle---and subsequently pushing it downward along columns~10 and~11.

Following the attachment of a $1$-handle and the lifting of the vertical segment in column~11, we deform the $0$-handle, effectively pushing it out of $\W{4}$ except in the immediate vicinity of the simple closed curves.

The remaining vertical segments with NE corners that cross horizontal segments---specifically those in columns~12 and~15---can similarly be lifted by attaching $1$-handles to the boundary of the $0$-handle. In practice, because the geometric construction proceeds sequentially from column~1 to column~17, the $1$-handles are attached in strictly increasing order of their column numbers; namely, columns~1, 2, 3, 4, 5, 6, 8, 9, 11, 12, 15, and~17.

Each red simple closed curve, introduced in association with an attached $1$-handle, is appropriately positioned on the $0$-handle. When the boundary of the $0$-handle is pushed to the right---sliding underneath the corresponding $1$-handle---the red curve is explicitly arranged to encircle the extruded portion of the $0$-handle boundary. 

Subsequently, by removing the guide line $B_0$, we construct a PALF, denoted $SF$, whose total space is diffeomorphic to $D^4$. Finally, by embedding the closed curve $C_0$ in the exact position formerly occupied by the guide line $B_0$, we obtain the desired PALF $P$. 

\smallskip
We now analyze the boundary components of the regular fiber, which can be classified into two distinct types:

\begin{itemize}
\item \textbf{Boundaries of holes formed by pushing the $0$-handle boundary to the right inside the gray regions.}  
For instance, consider the hole generated within the gray region $\G{1}$ after attaching a $1$-handle in column~1 and subsequently pushing the boundary of the $0$-handle to the right via isotopy. Let $jL$ (respectively, $jR$) denote the left (respectively, right) boundary segment of the long vertical band of the $1$-handle located in the $j$-th column. In our specific example, the segments $1R$, $2L$, $4L$, and $8L$ collectively form a single continuous boundary component within $\G{1}$, corresponding to the hole created by the extruded $0$-handle. Let $\hl$ denote the total number of such hole boundaries generated by pushing out the $0$-handle across all gray regions within the entire regular fiber.

\item \textbf{Boundaries along the outer edges of the gray regions.}  
Each gray region $\G{i}$ ($1 \leq i \leq 4$) contributes exactly one boundary component corresponding to its outer perimeter. Referring to Figure~\ref{fig:Y0306}, the outer boundary of $\G{1}$ is composed of the segments $1L, 3R, 2R, 5R, 4R, 11L, 9R, 8R,$ and $17R$; the boundary of $\G{2}$ comprises $5L$ and $6R$; the boundary of $\G{3}$ comprises $9L, 12R, 11R,$ and $15R$; and the boundary of $\G{4}$ comprises $3L, 6L, 12L, 15L,$ and $17L$. Consequently, the total number of such outer boundary components is exactly equal to the number of gray regions, which we denote by $\gr$.
\end{itemize}

Before proceeding to the discussion on the genus, we define several notations and state combinatorial identities that hold generally, not just for this specific example. 

Let $v$, $e$, and $f$ denote the number of vertices, edges, and faces of the associated planar graph, respectively, where the unbounded outer face is excluded from the count of $f$. Furthermore, let $\gr$ denote the number of bounded gray regions, $\wh$ the number of bounded white regions (i.e., excluding the unbounded outer region), and $\crs$ the number of crossings of the knot. By Euler's formula for planar graphs, the following relations hold:
\[
v = \gr, \qquad e = \crs, \qquad f = \wh,
\qquad
v - e + f = \gr - \crs + \wh = 1.
\]

Finally, with this preparation, we compute the genus $g$ of the regular fiber.

\begin{itemize}
\item \textbf{Boundary components:} 
The total number of boundary components of the regular fiber, denoted $\bn$, is the sum of the number of gray regions $\gr$ and the number of holes $\hl$ formed inside the gray regions. Thus, $\bn = \gr + \hl$. 
In our running example, since $\gr = 4$ and $\hl = 1$, we have $\bn = 5$.

\item \textbf{Number of $1$-handles:} 
The number of $1$-handles associated with vertical segments with NW corners is exactly $\wh + \hl$. Meanwhile, the number of $1$-handles associated with vertical segments with NE corners is exactly $\crs$. Therefore, the total number of $1$-handles in the regular fiber, denoted $\hn$, is given by $\hn = \wh + \hl + \crs$. 
In our example, substituting $\wh = 4$, $\hl = 1$, and $\crs = 7$ yields $\hn = 12$.

\item \textbf{Calculation of the genus:} 
As established above, the relation $\gr - \crs + \wh = 1$ generally holds, which implies $\crs = \gr + \wh - 1$. Substituting this into the equation for $\hn$ gives $\hn = 2\wh + \gr + \hl - 1$. 
Recall the topological Euler characteristic formula for a surface constructed from a single $0$-handle and $\hn$ $1$-handles, which states that $2 - 2g - \bn = 1 - \hn$. By substituting $\bn = \gr + \hl$ and our expression for $\hn$ into this formula, we deduce $g = \wh$. 

Consequently, the genus of the regular fiber is precisely equal to the number of white regions in the corresponding planar graph. In our specific example, this gives $g = 4$.
\end{itemize}

\smallskip

We now prove Theorem~\ref{thm:alternating}.

\begin{proof}
As illustrated in the preceding example, to prove the theorem, it suffices to establish the following two claims:

\begin{description}
\item[Claim~1]The total number of boundary components $\bn$ of the regular fiber equals the sum of the number of gray regions $\gr$ and the number of holes $\hl$ formed inside the gray regions.
\item[Claim~2]The total number of $1$-handles associated with vertical segments with NW corners equals the sum of the number of white regions $\wh$ and the number of holes $\hl$.
\end{description}

Recall that the number of $1$-handles associated with vertical segments with NE corners is exactly equal to the number of crossings of the knot, denoted $\crs$. Consequently, if Claim~1 and Claim~2 are verified, we can deduce the desired result $g = \wh$ directly from the following system of four equations:
\[
\bn = \gr + \hl, \quad \hn = \wh + \hl + \crs, \quad \gr - \crs + \wh = 1, \quad 2 - 2g - \bn = 1 - \hn.
\]

As defined in Section~\ref{sec:preliminaries}, we refer to the leftmost vertex of the associated planar graph as the \emph{starting point} and the rightmost vertex as the \emph{terminal point}. Based on this orientation, the gray regions can be classified into three distinct types. To complete the proof, it is necessary and sufficient to verify that Claim~1 and Claim~2 hold for each of these three types of gray regions.

\begin{itemize}
\item \textbf{Gray regions containing the starting point.}
In Figure~\ref{fig:Y0304}, the region $\G{1}$ serves as an example of this type. A detailed step-by-step visualization is provided in Figure~\ref{fig:Y0307}: the top row displays the knot in grid position, with the regions shaded according to the established rules and the corresponding planar graph superimposed. In the middle row, the left panel shows the regular fiber and the simple closed curves (vanishing cycles) resulting from the PALF construction, while the right panel explicitly highlights the five boundary components of the regular fiber by coloring each of them distinctly. Finally, the bottom row depicts the regular fiber deformed by an isotopy to clearly visualize these boundary components.

\begin{figure}[htbp]
\centering
\includegraphics[scale=0.55]{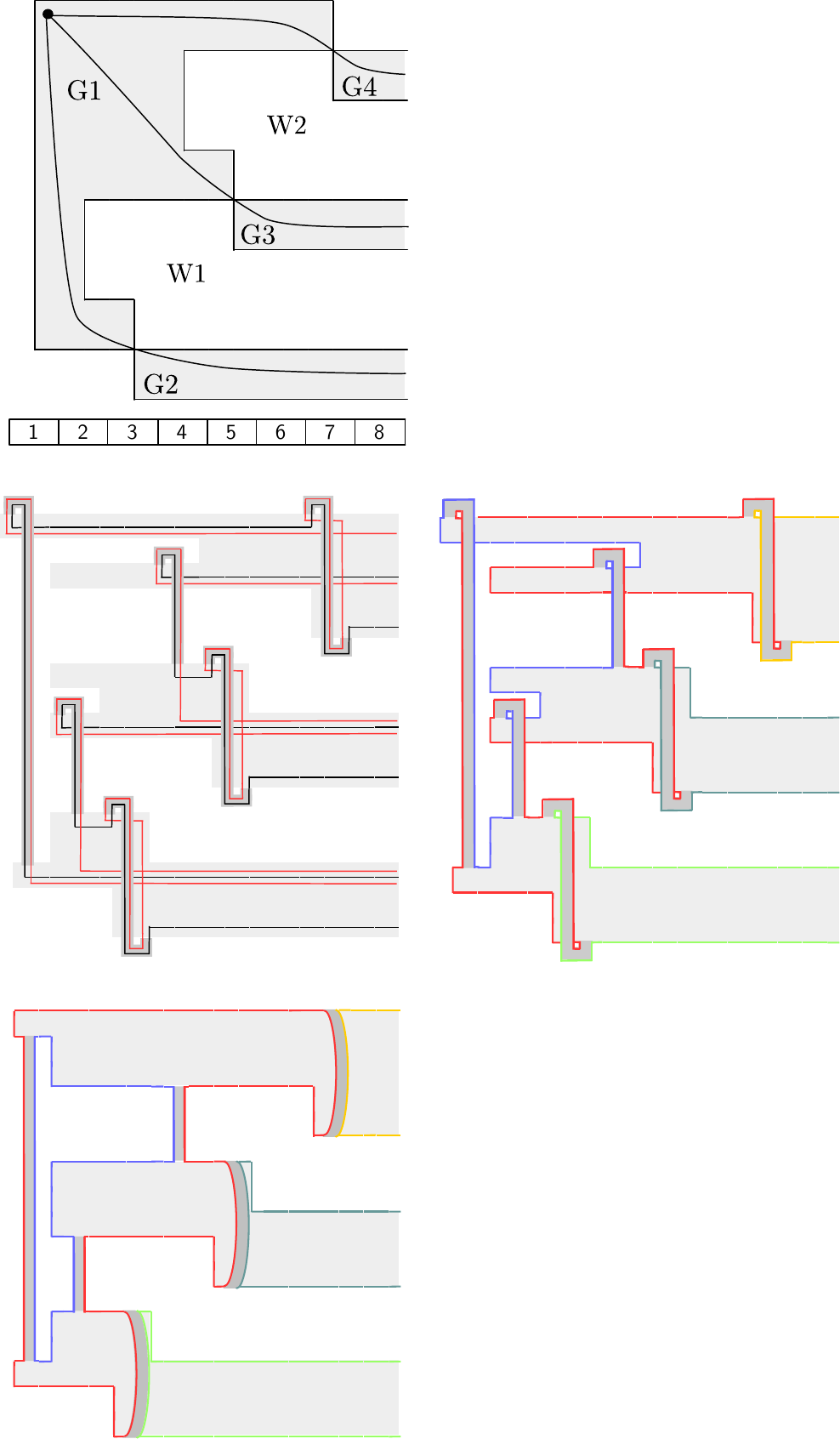}
\caption{The gray region containing the starting point.}
\label{fig:Y0307}
\end{figure}

In the regular fiber, the boundary associated with $\G{1}$ consists of exactly two components: the boundary of the hole formed strictly inside $\G{1}$ (depicted as a blue closed curve) and its outer boundary (depicted as a red closed curve). In contrast, the boundaries associated with $\G{2}$, $\G{3}$, and $\G{4}$ consist solely of their outer boundaries, which are shown as green, dark green, and orange closed curves, respectively.

The distribution of $1$-handles associated with vertical segments with NW corners can be summarized according to our previous classification:
\begin{description}
\item[Case (1)] \textbf{Vertical segments with NW corners located at the left edges of white regions.} 
In this example, the vertical segments with NW corners in columns~2 and~4 are situated at the left edges of the white regions $\W{1}$ and $\W{2}$, respectively.
\item[Case (2)] \textbf{Vertical segments with NW corners other than those in Case (1).} 
The vertical segment with an NW corner in column~1 falls into this category, generating the hole inside $\G{1}$.
\end{description}

Thus, Claim~1 and Claim~2 hold for this type of gray region.

\begin{figure}[htbp]
\begin{minipage}[htbp]{0.48\textwidth}
\centering
\includegraphics[scale=0.71]{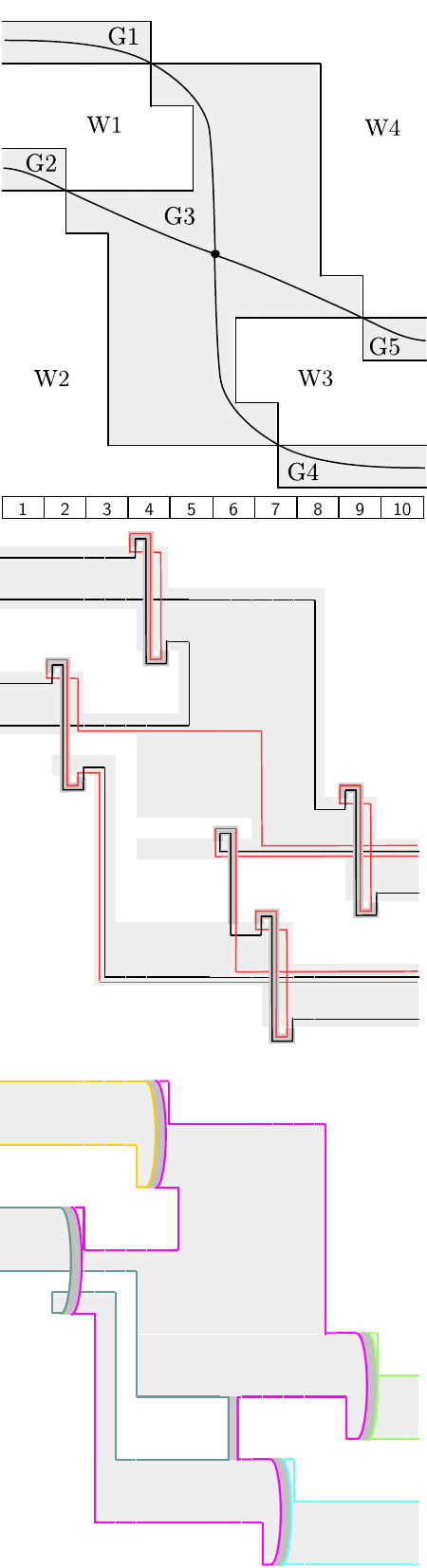}
\caption{Gray regions containing neither the starting point nor the terminal point.}
\label{fig:Y0308}
\end{minipage}
\hfill
\begin{minipage}[htbp]{0.48\textwidth}
\centering
\includegraphics[scale=0.72]{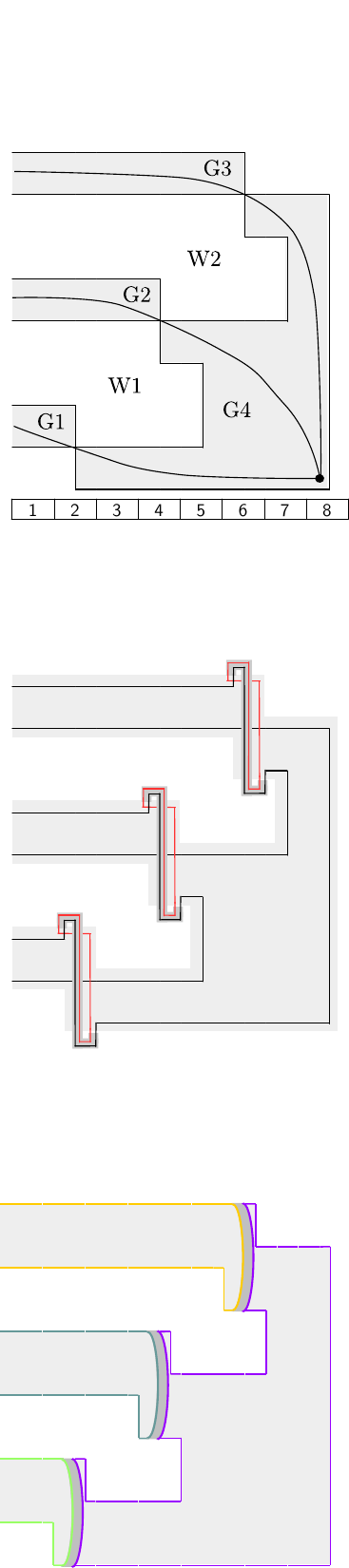}
\caption{The gray region containing the terminal point.}
\label{fig:Y0309}
\end{minipage}
\end{figure}

\item \textbf{Gray regions containing neither a starting point nor a terminal point.}
In Figure~\ref{fig:Y0304}, $\G{2}$ and $\G{3}$ exemplify such regions. This scenario is illustrated in detail in Figure~\ref{fig:Y0308}. Here, we must ensure that the neighborhood of the vertical segment in column~6, located at the left edge of $\W{3}$, becomes part of the boundary of the $0$-handle. Because the vertical segment in column~2 has already been lifted by a $1$-handle, we achieve this by first pushing the boundary of the $0$-handle to the right---sliding it underneath this pre-existing $1$-handle---and subsequently pushing it downward and to the right. Crucially, this isotopy operation does not introduce any new boundary components.

In the regular fiber resulting from the PALF construction, the boundaries of $\G{1}$, $\G{2}$, $\G{3}$, $\G{4}$, and $\G{5}$ correspond precisely to the closed curves colored orange, dark green, magenta, light blue, and green, respectively. Consequently, Claim~1 and Claim~2 hold for this intermediate type of gray region as well.

\item \textbf{Gray regions containing a terminal point.}
In Figure~\ref{fig:Y0304}, $\G{4}$ represents a region of this type. A corresponding example is provided in Figure~\ref{fig:Y0309}. In the regular fiber obtained after the PALF construction, the outer boundaries of $\G{1}$, $\G{2}$, $\G{3}$, and $\G{4}$ correspond to the closed curves colored green, dark green, orange, and purple, respectively. As with the previous cases, Claim~1 and Claim~2 also hold in this final scenario.
\end{itemize}

Since we have verified that Claim~1 and Claim~2 hold for all three possible types of gray regions, the desired equations are established. This completes the proof of Theorem~\ref{thm:alternating}.
\end{proof}

\subsection{Application to positive pretzel knots}\label{subsec:alternating_application}

We now present a concrete application of Theorem~\ref{thm:alternating}. 

\begin{cor}\label{cor:pretzel}
Let $X$ be a Stein surface consisting of a single $0$-handle and $m$ $2$-handles ($m \geq 1$) attached along a Legendrian knot. Suppose the topological type of this knot is a positive pretzel knot $P(q_1, \dots, q_s)$ with $s$ rows, as defined in Definition~\ref{def:pretzel} and illustrated in Figure~\ref{fig:Y0206}. Then, there exists a PALF whose total space is diffeomorphic to $X$, with a regular fiber of genus $s-1$.
\end{cor}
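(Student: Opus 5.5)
The plan is to reduce Corollary~\ref{cor:pretzel} to Theorem~\ref{thm:alternating}. The positive pretzel diagram of Figure~\ref{fig:Y0206} is reduced and alternating: it consists of $s$ vertical twist columns of right-handed half-twists joined at the top and bottom. So it is enough to check two things. First, this diagram can be put in grid position satisfying Condition~$\mathrm{M}$ with maximal tb number. Second, the planar graph associated with the resulting type (a) coloring has exactly $s-1$ bounded white regions. The case of non-maximal tb is handled by the modification of Section~\ref{sec:non_maximal_tb}, which does not change the fiber genus, so we may reduce to the maximal case.

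\textbf{Counting white regions.} In the standard pretzel diagram, the complementary regions come in two kinds: the $s+1$ regions separating consecutive twist columns (including the outer one), and the small bigon regions between consecutive crossings inside a column. With the checkerboard coloring, one of these two families is gray and the other white. We choose the coloring in which the column-separating regions are white and the outer region is white, as the paper requires.

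Then the bounded white regions are exactly the $s-1$ regions lying between consecutive columns. The gray regions are the two ``top/bottom'' regions together with the bigons. Concretely, the planar graph is a theta-like graph: two vertices joined by $s$ paths, the $k$-th path having $q_k$ edges. Its bounded faces number $s-1$. This agrees with Euler's formula: we have $v = 2+\sum(q_k-1)$, $e=\sum q_k$, and hence $f = 1 - v + e = s-1$.

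\textbf{Checking the coloring type.} We must make sure this coloring is of type (a) after the mirror/rotation procedure. I would do this by taking as input to Steps (1)--(6) the pretzel diagram, whose mirror has left-handed twists. For that mirror, the column-separating regions take the type (b) coloring as gray/white in the appropriate way. In Step (3), the two top/bottom gray vertices of the theta graph become the unique local maximum and minimum horizontal segments, and the bigon vertices become intermediate horizontal segments stacked between them. This is exactly a Mondrian diagram with one local maximum and one local minimum segment.

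\textbf{Main obstacle.} I expect the main obstacle to be verifying rigorously that the graph produced by this Mondrian diagram is the planar graph of the pretzel diagram with the \emph{correct} color class as white after the $90^\circ$ rotation. The concern is that the rotation could swap which family appears as white regions, which would yield genus $\sum(q_k-1)$ instead.

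I would settle this on the example $P(1,1,1)$, the trefoil, by comparing with Figure~\ref{fig:Y0302}, where genus $1 = s-1$. I would then argue that adding a crossing to a column only subdivides an edge of the planar graph, adding a gray bigon vertex but no white face, so the white count remains $s-1$. Theorem~\ref{thm:alternating} then gives the PALF with fiber genus $s-1$.
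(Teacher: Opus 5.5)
\emph{Your approach is the paper's, but the step you call the main obstacle is left unresolved, and your proposed check of it is wrong.}

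The paper's proof simply observes that $P(q_1,\dots,q_s)$ is alternating, reads $s-1$ bounded white regions off Figure~\ref{fig:Y0206}, and applies Theorem~\ref{thm:alternating}. Your reduction is the same. The trouble starts with your choice of coloring, because the coloring is not a choice. Type~(a) requires that at every crossing the upper-left and lower-right quadrants be gray. Assuming the standard grid convention that vertical strands pass over horizontal ones, these are the regions swept when the over-strand is rotated counterclockwise, i.e.\ Kauffman's $A$-regions. In an alternating diagram these form one checkerboard class, so the handedness of the half-twists decides which class is gray; your only freedom is which white region sits at infinity. If the between-column regions are $A$-regions, the type~(a) graph is not your theta graph. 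It is an $s$-cycle whose consecutive vertices are joined by $q_k$ parallel edges, and the white count is $1+\sum_k(q_k-1)$, not $\sum_k(q_k-1)$ as you wrote. Your Condition~M paragraph has the same dependence: Step~(2) applied to $\bar K$ yields the theta graph only if the top, bottom and bigon regions are the $A$-regions of $K$.

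Your proposed resolution via the trefoil fails. $P(1,1,1)$ has $s=3$, so the corollary predicts genus $s-1=2$, not $1$. Figure~\ref{fig:Y0302} has genus $1$, with three gray regions $\G{1},\G{2},\G{3}$ and a single bounded white region $\W{1}$. In the $P(1,1,1)$ picture, the three-region class consists of the outer region and the two between-column regions. So that trefoil has the between-column class gray, which is exactly the wrong class. Read correctly, the comparison says: if Figure~\ref{fig:Y0206} with all $q_k=1$ were the trefoil of Figure~\ref{fig:Y0302}, Theorem~\ref{thm:alternating} would give genus $1\neq s-1$.

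Your edge-subdivision induction is fine, since all crossings in one twist have the same $A/B$ type. But it only propagates whatever class the base case has. What you need is a direct check on the crossings drawn in Figure~\ref{fig:Y0206} that the between-row regions are the $B$-regions. Equivalently, $P(1,1,1)$ there must be the mirror of the trefoil in Figure~\ref{fig:Y0302}. This is the chirality the paper tacitly takes from its figure, and it is consistent with its $P(2,3,3)$ example having genus $2$. Once that is verified, your theta-graph Mondrian diagram, with the two hub vertices as the unique local maximum and minimum segments, gives Condition~M, and the rest of your argument goes through.
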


\begin{proof}
By definition, a positive pretzel knot $P(q_1, q_2, \dots, q_s)$ with $s \geq 3$ rows is an alternating knot. As depicted in the standard diagram in Figure~\ref{fig:Y0206}, the associated planar graph for this alternating knot contains exactly $s-1$ bounded white regions. Therefore, by Theorem~\ref{thm:alternating}, there exists a PALF whose total space is diffeomorphic to $X$, and its regular fiber has a genus equal to $s-1$.
\end{proof}

As a specific example, consider the positive pretzel knot $P(2,3,3)$, which corresponds to the knot $8_5$ in the Rolfsen knot table. Following our established procedure, we convert this knot into grid position. We then deform each vertical segment that crosses a horizontal segment into an arc containing a vertical segment of exactly two lattice units in height, bounded by an NE corner and an SW corner. The resulting knot in grid position, denoted $\widetilde{C_0''}$, is depicted in Figure~\ref{fig:Y0310} alongside its associated planar graph.  

The constructed PALF, along with its regular fiber and vanishing cycles, is illustrated in Figure~\ref{fig:Y0312}. The regular fiber has a genus of $2$---which exactly equals the number of white regions in the planar graph---and possesses $8$ boundary components. (Note that in the Kirby diagram associated with this PALF, the framing of the attaching circle corresponding to $C_0$ is $0$.)

\begin{figure}[htbp]
  \begin{minipage}[t]{0.49\textwidth}
    \centering
         \begin{tikzpicture}
            \node[anchor=south west, inner sep=0] (image) at (0,0)    {\includegraphics[scale=0.4]{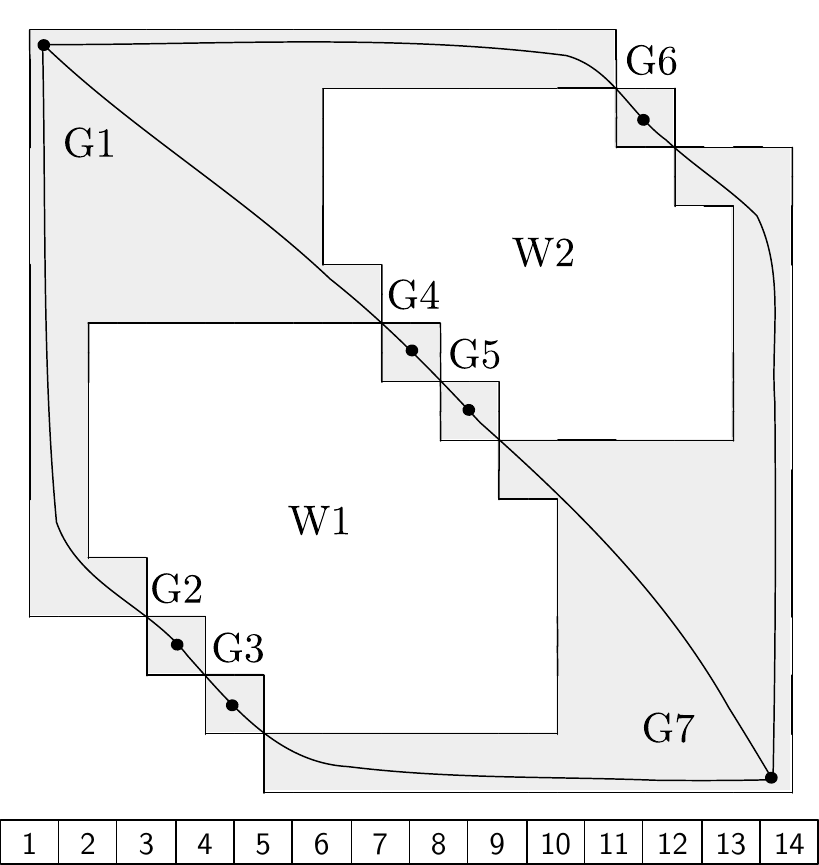}};
            \begin{scope}[x={(image.south east)},y={(image.north west)}]
            \node[font=\scriptsize] at (0, 0.87) {$\widetilde{C_0''}$};
            \end{scope}
         \end{tikzpicture}
    \caption{A positive pretzel knot in grid position and its associated planar graph.}
     \label{fig:Y0310}
   \end{minipage}
  \hfill
  \begin{minipage}[t]{0.49\textwidth}
    \centering
       \begin{tikzpicture}
            \node[anchor=south west, inner sep=0] (image) at (0,0)    {\includegraphics[scale=0.4]{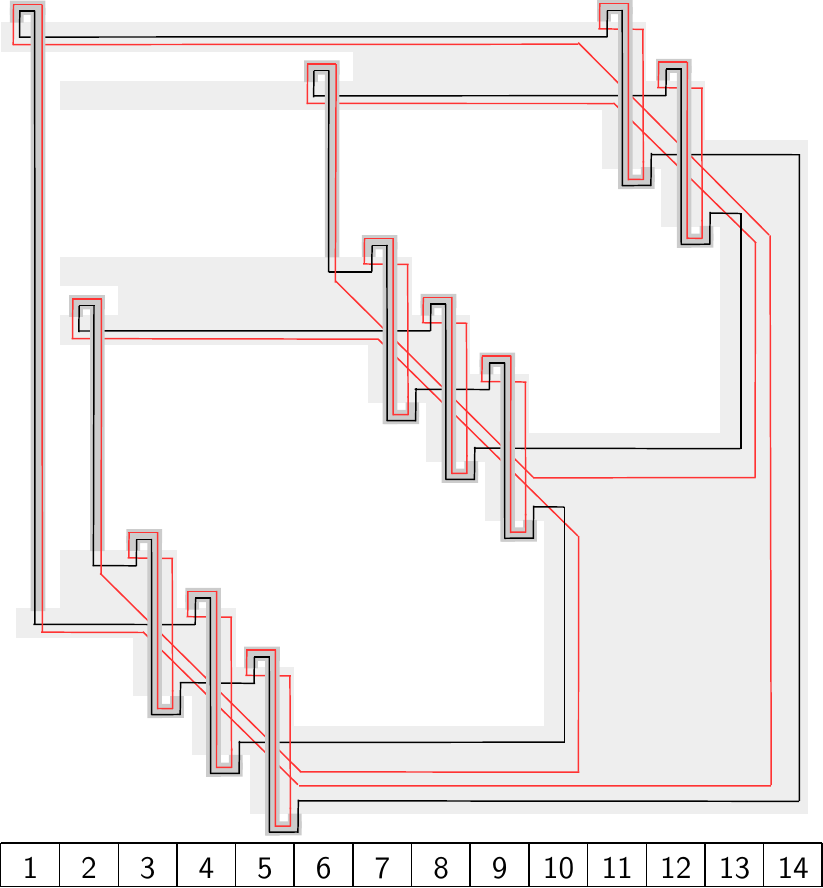}};
            \begin{scope}[x={(image.south east)},y={(image.north west)}]
            \node[font=\scriptsize] at (-0.001, 0.87) {$C_0$};
            \end{scope}
         \end{tikzpicture}
    \caption{Monodromy factorization:
$(C_0, C_{12}, C_{11}, C_9, C_8, C_7, C_6, $ $C_5, C_4,$
$C_3, C_2, C_1)$.}
    \label{fig:Y0312}
  \end{minipage}
\end{figure}

\section{Lefschetz fibrations on knot traces of extended alternating knots}\label{sec:extended}

\subsection{PALF construction for extended alternating knots}\label{subsec:extended_main}

We first present a geometric description of the alternating knots that satisfy Condition~$\mathrm{M}$, as introduced in Section~\ref{sec:preliminaries}. Figure~\ref{fig:Y0401} illustrates the planar diagram of an alternating knot derived from the one previously shown in Figure~\ref{fig:Y0303}. In this representation, the crossings are positioned precisely at the lattice points of a two-dimensional grid, and the associated gray-and-white checkerboard coloring is explicitly indicated. 

Motivated by this geometric representation, we now formally define the broader class of \emph{extended alternating knots}, a class that naturally encompasses the alternating knots of type (a) satisfying Condition~$\mathrm{M}$.

\begin{dfn}\label{def:extended}
An \emph{extended alternating knot} is defined as a knot that admits an \emph{extended alternating diagram}, constructed as follows. 

On a two-dimensional lattice, we place square \emph{crossing nodes} at the lattice points, where each node represents a localized region of crossings. These nodes are then connected by the strands of the knot. The schematic representation of a crossing node is depicted on the left side of Figure~\ref{fig:Y0403}, while its detailed internal structure is illustrated on the right. 

For each crossing node, the parameter $q$ satisfies $q \geq 1$. Furthermore, the total number of strands entering the node from the left must equal the total number of strands exiting to the right, and this common value must be strictly greater than one: $\sum_{i=1}^u p_i = \sum_{j=1}^v r_j > 1$.

The plane containing this diagram is naturally partitioned into four types of areas: the crossing nodes themselves, bounded gray regions, bounded white regions, and the unbounded outer region. Specifically, a \emph{white region} is bounded by the top or bottom edges of the square crossing nodes and the connecting strands of the knot. Conversely, a \emph{gray region} is bounded by the left or right edges of these square nodes and the strands.

Finally, we define a specific reduction operation on this diagram: suppose we retain only the strands that form the boundaries of the gray regions, deleting all other strands, and we replace every crossing node with exactly $q$ positive right-handed half-twists involving two strands. The knot obtained through this operation is formally defined as \emph{the alternating knot corresponding to the extended alternating knot}. We then define the planar graph of the extended alternating knot to be the planar graph of this corresponding alternating knot.
\end{dfn}

Figure~\ref{fig:Y0402} illustrates a concrete example of an extended alternating knot. The corresponding alternating knot, obtained via the reduction operation described above, is depicted in Figure~\ref{fig:Y0401}. 

Although this particular example is arranged in two columns and five rows (with the left column containing four crossing nodes and the right column containing two), there are generally no restrictions on the number of columns or rows in an extended alternating diagram. Furthermore, it is important to note that strands are permitted to pass entirely through the bounded gray regions; for instance, the second vertical strand from the left in this diagram is exactly such a strand. 

With these geometric preparations in place, we are now in a position to state the main theorem of this section.

\begin{figure}[htbp]
\begin{minipage}[htbp]{0.49\textwidth}
\centering
\includegraphics[scale=0.36]{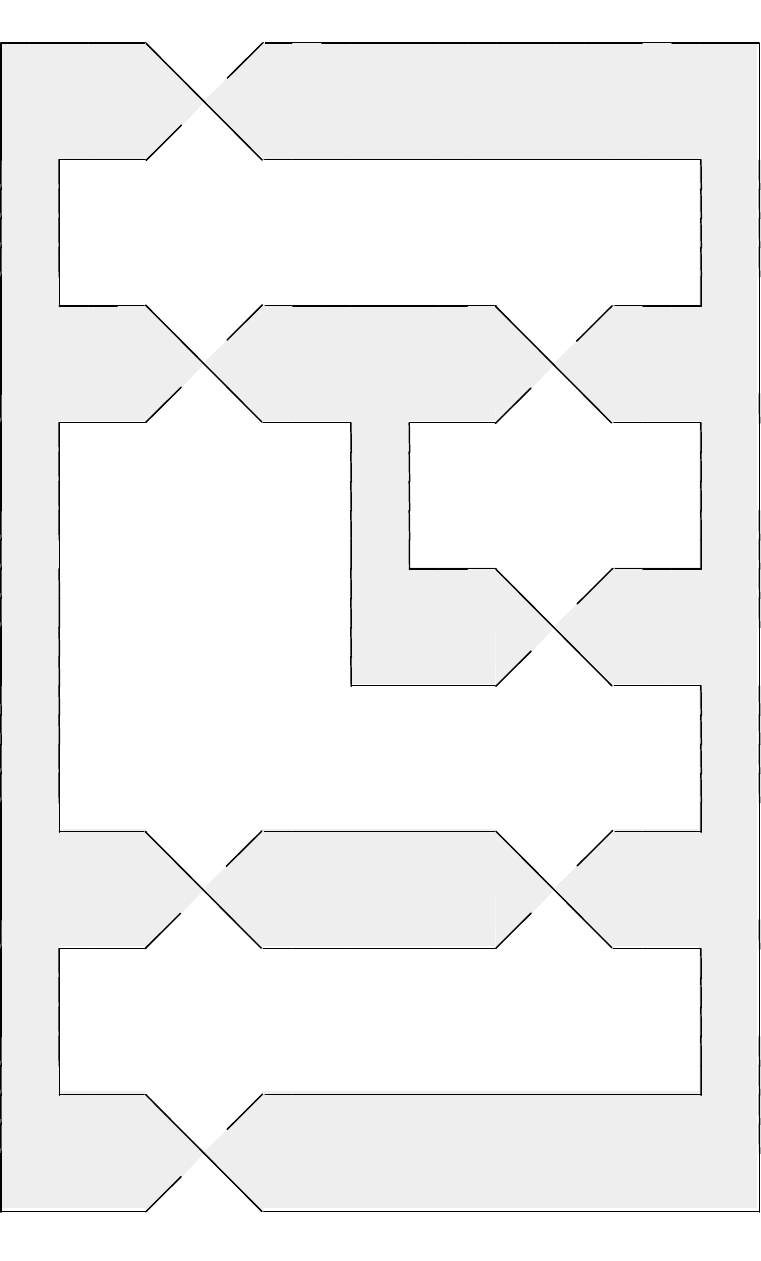}
\caption{The planar diagram of an alternating knot satisfying Condition~$\mathrm{M}$, derived from the one in Figure~\ref{fig:Y0303}.}
\label{fig:Y0401}
\end{minipage}
\hfill
\begin{minipage}[htbp]{0.49\textwidth}
\centering
\includegraphics[scale=0.36]{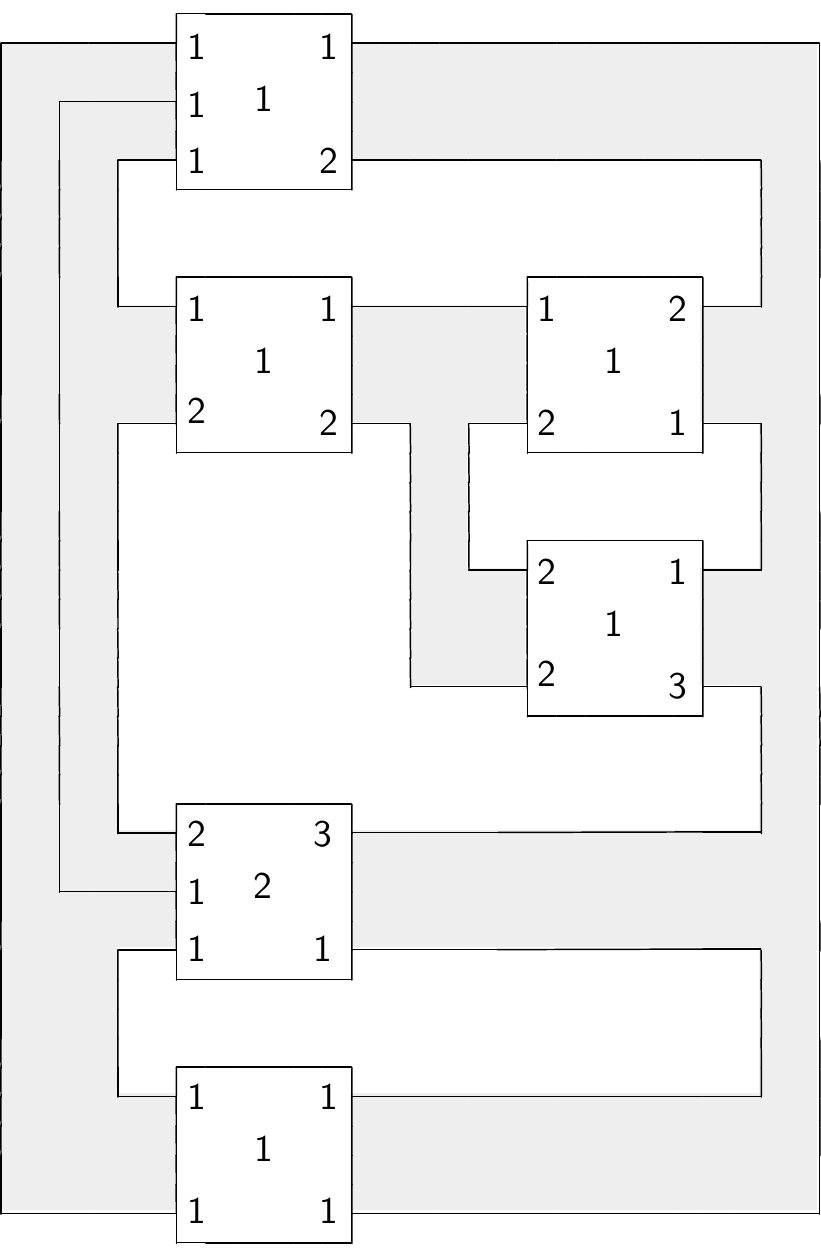}
\caption{An example of an extended alternating knot (its corresponding alternating knot is shown in Figure~\ref{fig:Y0401}).}
\label{fig:Y0402}
\end{minipage}
\end{figure}

\begin{figure}[htbp]
\centering
\includegraphics[scale=0.7]{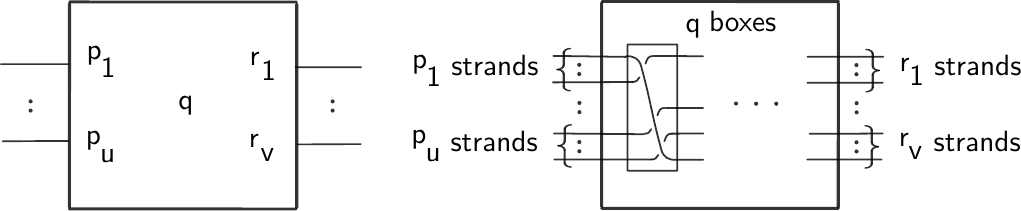}
\caption{A crossing node: schematic representation (left) and its internal structure with $q$ positive crossings (right).}
\label{fig:Y0403}
\end{figure}

\begin{thm}\label{thm:extended}
Let $X$ be a Stein surface consisting of a single $0$-handle and $m$ $2$-handles ($m \geq 1$) attached along a Legendrian knot whose topological type is that of an extended alternating knot. Consider the planar graph associated with this extended alternating knot. 
Then, there exists a positive allowable Lefschetz fibration (PALF) whose total space is diffeomorphic to $X$, and whose regular fiber has a genus exactly equal to the number of white regions in the corresponding planar graph.
\end{thm}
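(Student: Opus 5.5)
The plan is to reduce Theorem~\ref{thm:extended} to the same counting argument used for Theorem~\ref{thm:alternating}. First I would put an extended alternating diagram into grid position of type (a) with maximal Thurston--Bennequin number. Lay out the lattice of crossing nodes as in Figure~\ref{fig:Y0402}. Each crossing node consists of $q$ positive half-twists among bundles of parallel strands, and I would realize it in grid position as a staircase of NE-cornered vertical segments crossing horizontal segments. This is the same shape as for a positive torus braid in grid position, so no new NW corners appear inside a node. As for alternating knots, I then apply the deformation of Figure~\ref{fig:Y0301}, so that every vertical segment crossing a horizontal segment becomes a two-unit segment bounded by an NE corner and an SW corner. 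The case where tb is not maximal is deferred to Section~\ref{sec:non_maximal_tb}.

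Next I run the PALF construction of Section~\ref{sec:alternating} column by column. Every crossing, including each crossing inside a node, has an NE-cornered vertical segment, so it receives one $1$-handle. NW-cornered vertical segments receive $1$-handles exactly as in Cases (1) and (2): one at the left edge of each white region, plus one for each hole pushed into a gray region. The key observation is that the extra strands inside the nodes, and the strands passing through gray regions, create no new white regions. The regions between parallel strands in a node, and those cut off by strands through gray regions, are thin bands; in the fiber they merge with the adjacent gray-region boundaries or produce holes that are counted in $\hl$.

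Writing $\crs'$ for the total number of crossings of the extended diagram and $\gr'$, $\wh'$ for the gray and white counts with these thin regions included, I need to verify Claims~1 and~2 with $\gr,\wh$ taken from the corresponding alternating knot's planar graph. The extra crossings and extra boundary components must then cancel in the Euler characteristic. Concretely, I would track how each crossing node changes the two sides of $2-2g-\bn=1-\hn$. A node with $q$ half-twists on $N=\sum p_i$ strands contributes $q(N-1)$ crossings. Replacing it by $q$ two-strand crossings removes $q(N-2)$ crossings, and I would check that this removes exactly the same number of boundary components, those of the thin bigon-like regions, and no NW handles. Then $\hn-\bn$ equals its value for the corresponding alternating knot, and Theorem~\ref{thm:alternating}'s computation gives $g=\wh$. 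An alternative bookkeeping is to verify, one crossing at a time, that each crossing added inside a node adds one $1$-handle and one boundary component, keeping $g$ fixed.

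The main obstacle is this local verification inside a crossing node, together with strands running through gray regions. I must confirm that the $0$-handle pushing, sliding under previously attached $1$-handles, never merges or splits boundary components differently than in the alternating case. The monotonicity property stated in step (6) is what keeps it under control: edges run from the second to the fourth quadrant, and no edge goes backward. I would check this first on $T_{p,q}$, where there is one node and the claim $g=1$ can be verified directly, and then argue that a general node is handled by the same staircase picture, applied in each gray region in turn in the same three cases (starting point, intermediate, terminal) as in the proof of Theorem~\ref{thm:alternating}.
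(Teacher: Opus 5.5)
Your overall frame matches the paper: reduce to Claims~1 and~2, then rerun the Euler characteristic computation of Theorem~\ref{thm:alternating}. But the node bookkeeping, which is the entire content of the theorem, is left as ``the main obstacle,'' and the ledger you commit to cannot be right. You assume three things:
\begin{itemize}
\item a node creates no new NW corners;
\item each of its $q(N-1)$ crossings gets its own NE-cornered segment and hence its own $1$-handle;
\item the $q(N-2)$ extra handles are offset by boundary components of thin bigons.
\end{itemize}

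The first assumption is incompatible with $\mathrm{tb}=\mathrm{writhe}-\#\mathrm{NW}$, since NW corners are left cusps. Take $T_{3,4}$, whose corresponding alternating link is $T_{2,4}$ (four crossings, maximal $\mathrm{tb}=2$). Your ledger adds four positive crossings and no NW corners to the maximal-$\mathrm{tb}$ grid used in Theorem~\ref{thm:alternating}. That would give $\mathrm{tb}=6$, above the maximal value $5$ of $T_{3,4}$. Indeed, the paper's $T_{3,4}$ fiber ($\hn=7$, $\crs=4$) has $\wh+\hl=1+2=3$ NW handles. Your aside that thin regions may ``produce holes counted in $\hl$'' also contradicts ``no NW handles,'' since each hole comes from its own Case~(2) NW segment.

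The second assumption misreads the construction. A Case~(3) vertical segment may cross several horizontal segments and still receives a single $1$-handle; the deformation of Figure~\ref{fig:Y0301} does not cut it into two-unit pieces. If you cut it into a staircase yourself, the intermediate endpoints lie between parallel strands of the bundle. Then the property Case~(3) relies on (the regions just above and below the segment are white or outer) fails. Your count of thin-bigon boundary components is also only asserted, never checked.

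The missing idea is that passing from the corresponding alternating knot to the extended one changes only Case~(2).
\begin{itemize}
\item The Case~(1) count is still $\wh$, one per white region.
\item The Case~(3) count is still the crossing number $\crs$ of the corresponding alternating knot, precisely because a single segment may cross several horizontal segments.
\item So every additional vertical segment has an NW corner and is of Case~(2), and each Case~(2) segment produces exactly one hole boundary.
\item The outer boundaries remain one per gray region.
\end{itemize}
Hence Claims~1 and~2 hold with the $\gr$, $\wh$, $\crs$ of the corresponding alternating knot. Both $\hn$ and $\bn$ increase by the same amount, and the four equations again give $g=\wh$. For $T_{3,4}$, $\hn=\wh+\hl+\crs=1+2+4=7$ and $\bn=\gr+\hl=4+2=6$, so $g=1$, as in Figure~\ref{fig:Y0411}.
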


\begin{proof}
Let us consider a concrete example. Figure~\ref{fig:Y0404} illustrates the result of converting the extended alternating knot from Figure~\ref{fig:Y0402} into the knots $\widetilde{C_{0k}''}$ ($1 \leq k \leq 3$) in grid position, alongside the associated planar graph. The guide lines $B_{0k}$ are copies of the corresponding $\widetilde{C_{0k}''}$ drawn on the $0$-handle $D^2$, which is depicted as a gray square (see Figure~\ref{fig:Y0405}). The constructed PALF, along with its regular fiber and vanishing cycles, is shown in Figure~\ref{fig:Y0406}. (Note that in the Kirby diagram associated with this PALF, the framings of the attaching circles corresponding to $\overline{C_{01}}$, $\overline{C_{02}}$, and $\overline{C_{03}}$ are $-3$, $-6$, and $-2$, respectively.)

\begin{figure}[htbp]
  \begin{minipage}[t]{0.49\textwidth}
    \centering
         \begin{tikzpicture}
            \node[anchor=south west, inner sep=0] (image) at (0,0)    {\includegraphics[scale=0.30]{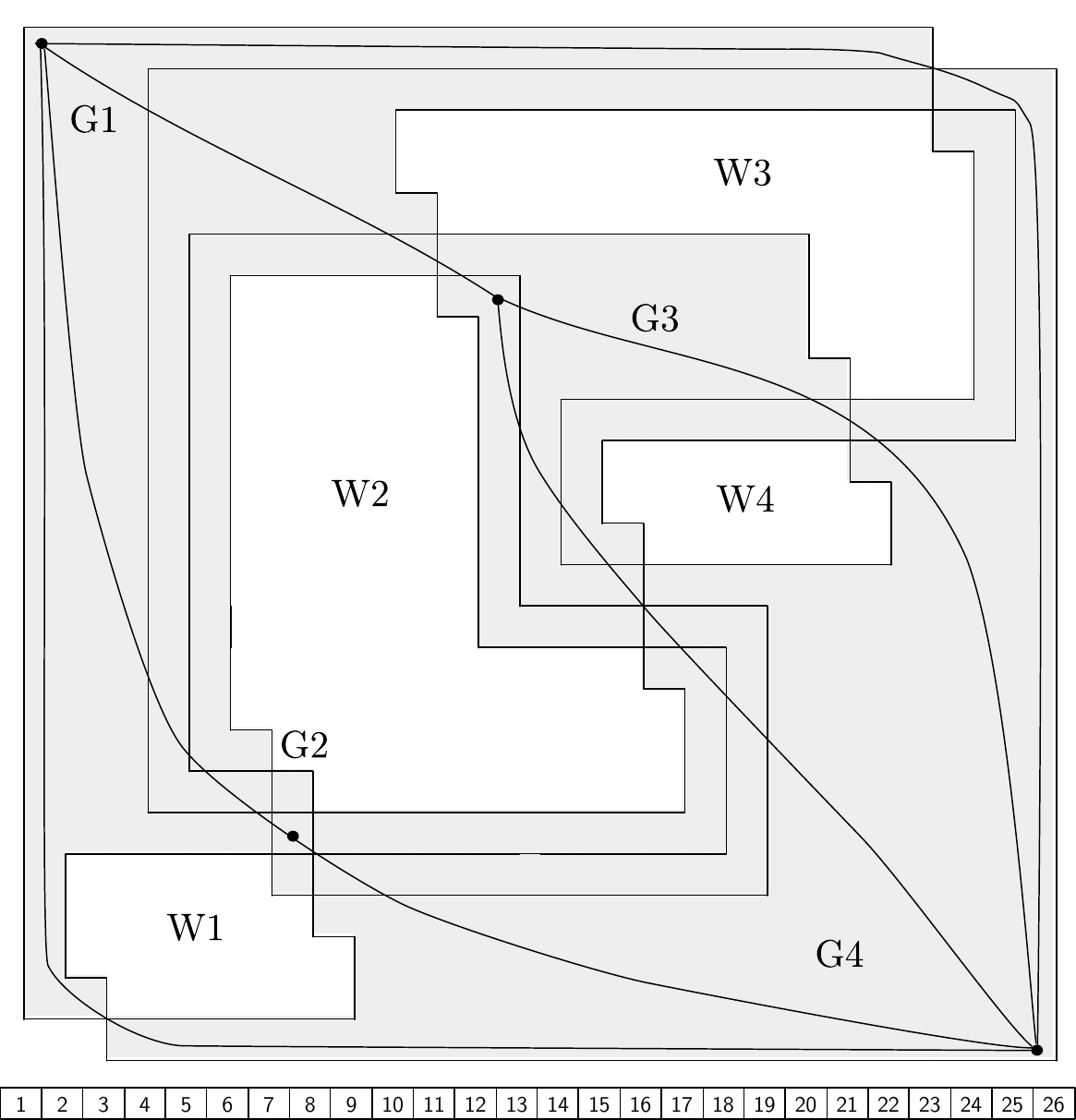}};
            \begin{scope}[x={(image.south east)},y={(image.north west)}]
            \node[font=\scriptsize] at (-0.02, 0.87) {$\widetilde{C_{01}''}$};
            \node[font=\scriptsize] at (0.10, 0.27) {$\widetilde{C_{02}''}$};
            \node[font=\scriptsize] at (0.265, 0.7) {$\widetilde{C_{03}''}$};
            \end{scope}
         \end{tikzpicture}
    \caption{An extended alternating knot in grid position and its associated planar graph.}
    \label{fig:Y0404}
  \end{minipage}
  \hfill
  \begin{minipage}[t]{0.49\textwidth}
    \centering
       \begin{tikzpicture}
            \node[anchor=south west, inner sep=0] (image) at (0,0)    {\includegraphics[scale=0.30]{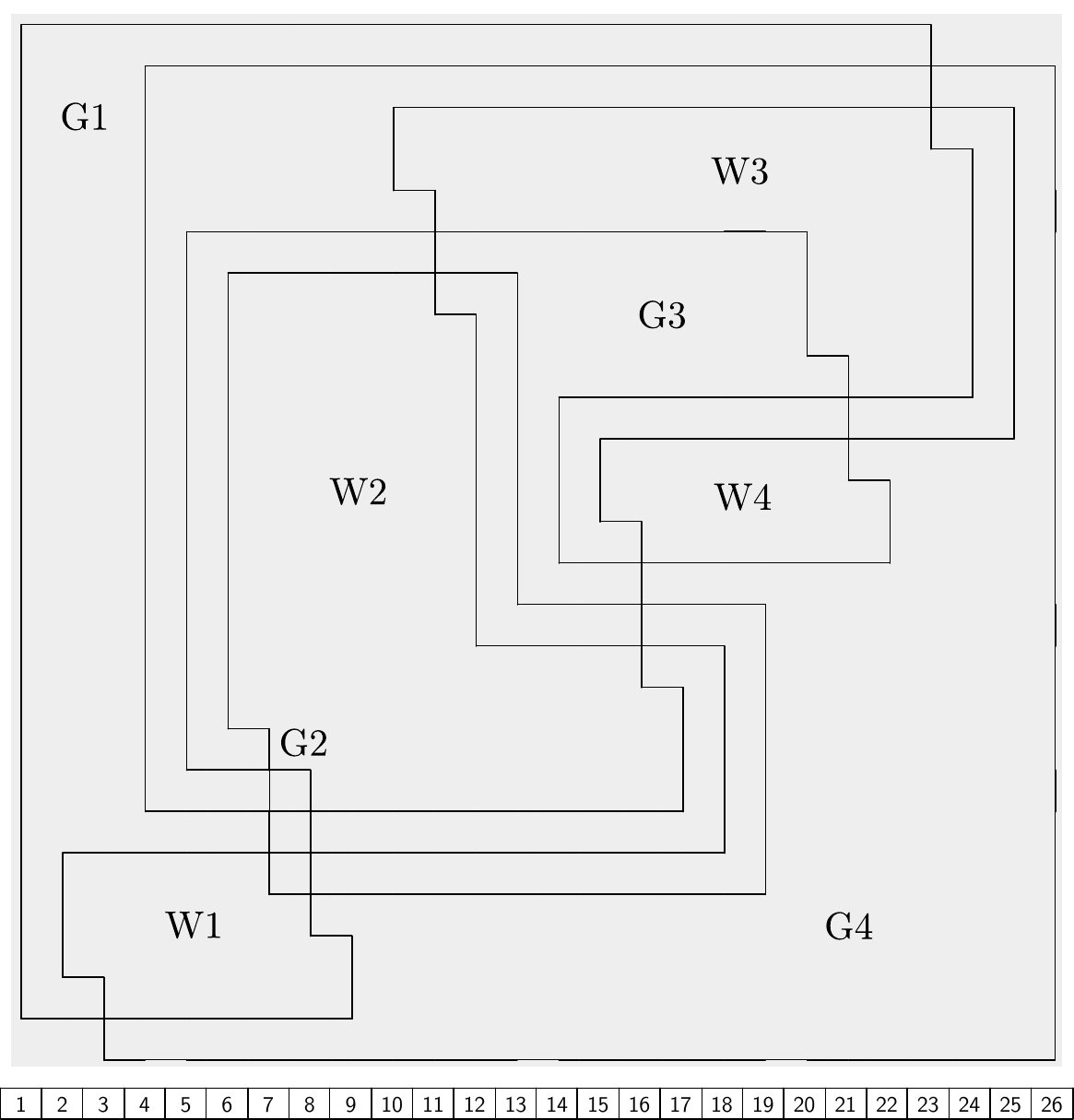}};
            \begin{scope}[x={(image.south east)},y={(image.north west)}]
            \node[font=\scriptsize] at (-0.02, 0.87) {$B_{01}$};
            \node[font=\scriptsize] at (0.095, 0.265) {$B_{02}$};
            \node[font=\scriptsize] at (0.265, 0.7) {$B_{03}$};
            \end{scope}
         \end{tikzpicture}
    \caption{The guide lines $B_{01}$, $B_{02}$, and $B_{03}$ on the surface of the $0$-handle.}
    \label{fig:Y0405}
  \end{minipage}
\end{figure}

\begin{figure}[htbp]
\centering  
\begin{tikzpicture}
    \node[anchor=south west, inner sep=0] (image) at (0,0)  {\includegraphics[scale=0.54]{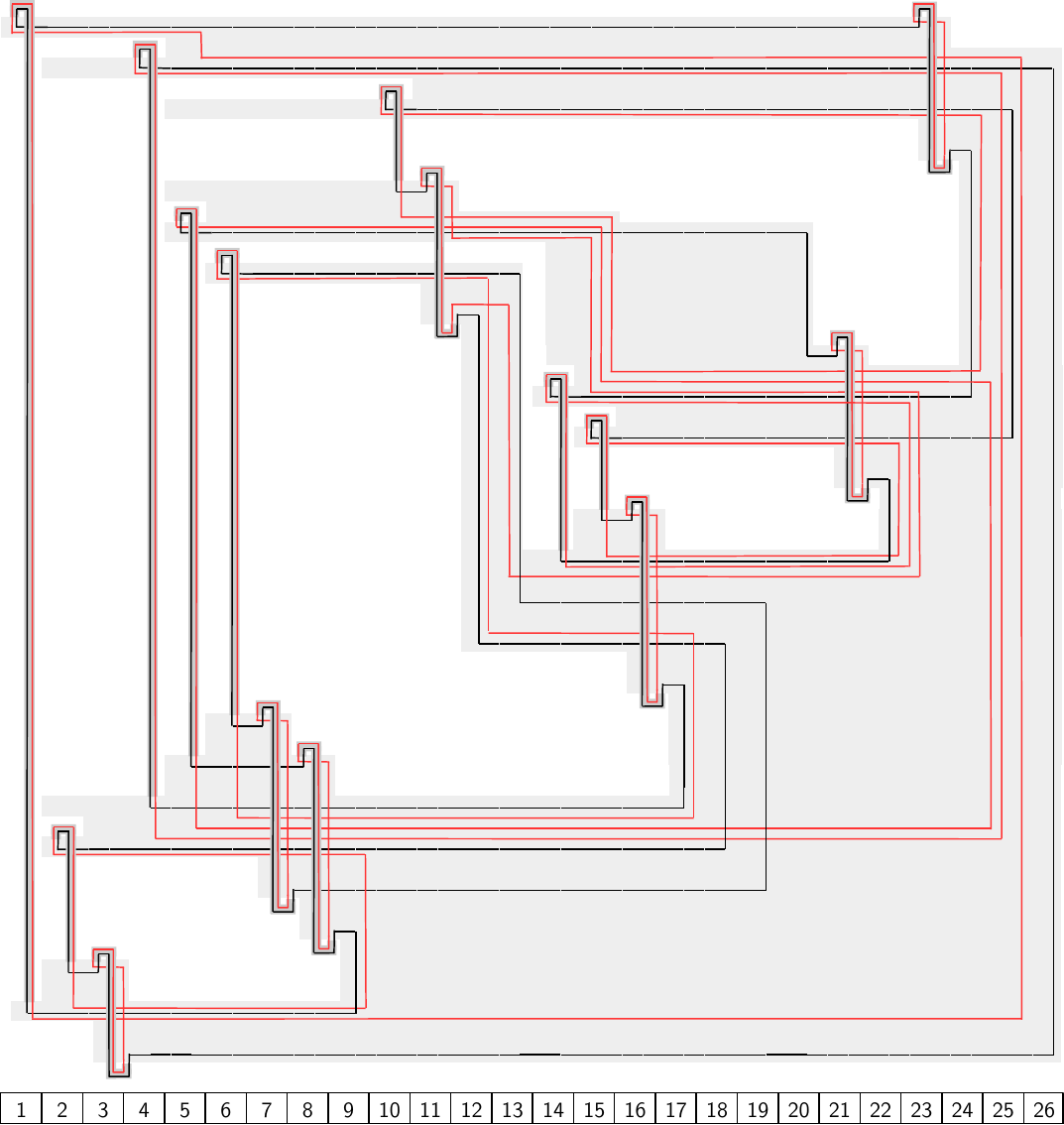}};
    \begin{scope}[x={(image.south east)},y={(image.north west)}]
        \node[font=\scriptsize] at (0, 0.87) {$C_{01}$};
        \node[font=\scriptsize] at (0.115, 0.265) {$C_{02}$};
        \node[font=\scriptsize] at (0.255, 0.7) {$C_{03}$};
    \end{scope}
\end{tikzpicture}
\caption{Monodromy factorization:
$(C_{03}, C_{02}, C_{01}, C_{23}, C_{21}, C_{16}, C_{15}, C_{14}, $
$C_{11}, C_{10}, C_8, C_7, C_6, C_5, C_4, C_3, C_2, C_1)$. Each $C_i$ $(1 \leq i \leq 23)$ denotes a red simple closed curve passing over the $1$-handle in the $i$-th column.}
\label{fig:Y0406}
\end{figure}

The vertical segments of $B_{0k}$, along with the $1$-handles attached to the boundary of the $0$-handle to lift them, can be classified into the following four distinct types:

\begin{description}
\item[Case (1)]  \textbf{Vertical segments with NW corners located at the left edges of white regions.}  
In an extended alternating knot, the total number of such segments is identical to that in its corresponding alternating knot.

\item[Case (2)]  \textbf{Vertical segments with NW corners other than those in Case (1).}  
In our running example, such segments appear in columns~1, 4, and~5 within $\G{1}$, as well as in column~14 within $\G{3}$. In an extended alternating knot, the number of these segments is greater than or equal to that in its corresponding alternating knot.

\item[Case (3)]  \textbf{Vertical segments with NE corners that cross horizontal segments.}  
In an extended alternating knot, the total number of such segments is exactly equal to that in its corresponding alternating knot, even though some of these segments may cross multiple horizontal segments.

\item[Case (4)]  \textbf{Vertical segments with NE corners that do not cross horizontal segments.}  
As established for Case~(4) in Section~\ref{sec:alternating}, no $1$-handles are attached to these segments.
\end{description}

\smallskip
We now analyze the boundary components of the regular fiber, which can be classified into two distinct types:

\begin{itemize}
\item \textbf{Boundaries of holes formed by pushing the boundary of the $0$-handle to the right inside the gray regions.}  
These holes are generated after attaching $1$-handles to lift the vertical segments classified under Case~(2). Consequently, there is a strict one-to-one correspondence between each Case~(2) segment and the boundary of such a hole. Recall that $jL$ (respectively, $jR$) denotes the left (respectively, right) boundary segment of the long vertical band of the $1$-handle located in the $j$-th column. In our specific example illustrated in Figure~\ref{fig:Y0406}, the explicit correspondences are as follows: the segment in column~1 corresponds to a hole containing the segments $1R$, $2L$, and $4L$; the segment in column~4 corresponds to a hole containing $4R$, $5L$, and $10L$; the segment in column~5 corresponds to a hole containing $5R$, $6L$, and $14L$; and the segment in column~14 corresponds to a hole containing $14R$ and $15L$. Let $\hl$ denote the total number of such hole boundary components, which is precisely equal to the number of Case~(2) vertical segments.

\item \textbf{Boundaries along the outer edges of the gray regions.}  
Each gray region $\G{i}$ ($1 \leq i \leq 4$) contributes exactly one boundary component corresponding to its outer perimeter. Thus, the total number of such outer boundary components is exactly equal to the number of gray regions, which we again denote by $\gr$.
\end{itemize}

Just as in the proof of Theorem~\ref{thm:alternating}, the following two claims hold:

\begin{description}
\item[Claim~1]  The total number of boundary components $\bn$ of the regular fiber equals the sum of the number of gray regions $\gr$ and the number of holes $\hl$ formed inside the gray regions.
\item[Claim~2]  The total number of $1$-handles associated with vertical segments with NW corners equals the sum of the number of white regions $\wh$ and the number of holes $\hl$.
\end{description}

By computing the genus $g$ of the regular fiber in the exact same manner as in Theorem~\ref{thm:alternating}, we again obtain $g = \wh$. In our running example, the regular fiber therefore has a genus of $4$ and possesses $8$ boundary components.

The fundamental reason why the genus of the regular fiber associated with the extended alternating knot coincides with that of its corresponding alternating knot can be understood as follows. When transitioning from the corresponding alternating knot to the extended alternating knot, the number of vertical segments increases \emph{exclusively} within the Case~(2) category. Consequently, the total number of boundary components of the regular fiber increases by the exact same amount, due solely to the new holes formed after attaching $1$-handles to lift these additional Case~(2) vertical segments. In the Euler characteristic computation for the genus, these two simultaneous, equal increases perfectly cancel each other out.
\end{proof}

\subsection{Application to positive torus knots and torus pretzel knots}\label{subsec:extended_application}

We now present concrete applications of Theorem~\ref{thm:extended}.

\begin{cor}\label{cor:torus}
Let $X$ be a Stein surface consisting of a single $0$-handle and $m$ $2$-handles ($m \geq 1$) attached along a Legendrian knot. Suppose the topological type of this knot is a positive torus knot $T_{p,q}$, as defined in Definition~\ref{def:torus} and illustrated in Figure~\ref{fig:Y0207}. Then, there exists a PALF whose total space is diffeomorphic to $X$, with a regular fiber of genus $1$.
\end{cor}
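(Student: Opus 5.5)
The plan is to apply Theorem~\ref{thm:extended} directly. We exhibit $T_{p,q}$ as an extended alternating knot whose corresponding alternating knot, in the sense of Definition~\ref{def:extended}, has a planar graph with exactly one bounded white region.

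\textbf{Step 1: an extended alternating diagram for $T_{p,q}$.} Start from the standard diagram of Figure~\ref{fig:Y0207}, with $p$ strands passing through $q$ crossing boxes. Each box is a full block of positive crossings in which the $p$ strands are cyclically shifted, and a cyclic shift of $p$ strands is a product of $p-1$ positive half-twists. We rearrange the closure so that the boxes sit at lattice points as square crossing nodes in the sense of Figure~\ref{fig:Y0403}.

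The natural choice is a single row of nodes, with the closing strands of the braid running around the outside. More precisely, we use two nodes, joined so that the strands entering and exiting satisfy $\sum p_i=\sum r_j>1$. All $p$ strands are grouped into bundles, and the remaining twists are absorbed into the parameter $q$ of the nodes.

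\textbf{Step 2: verify the axioms of Definition~\ref{def:extended}.} The strand counts on the left and right of each node must agree and exceed one, which holds since $p>1$. Each node must have $q\ge 1$, which holds since $q>1$. Strands are allowed to pass through gray regions, so the closing strands cause no difficulty.

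\textbf{Step 3: compute the corresponding alternating knot.} Retain only the strands bounding gray regions and replace each node by $q$ positive half-twists on two strands. The goal is that the result is a reduced alternating diagram, namely a two-bridge-like chain of two twist regions. Its planar graph should be a cycle on the gray vertices, for instance two vertices joined by two edges, which has exactly one bounded face. Hence $\wh=1$, and Theorem~\ref{thm:extended} gives genus~$1$.

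I expect the main obstacle to be Step~1. One must check that the strand bookkeeping inside the nodes reproduces exactly the braid closure of $T_{p,q}$ and not some other positive braid closure. One must also check that the white and gray regions come out as required: white regions bounded by top and bottom edges of nodes, gray regions by the left and right edges. To handle this I would first work out $T_{2,q}$ and $T_{3,4}$ by hand, matching the figures in the style of Figure~\ref{fig:Y0402}, and then argue inductively on $q$ that adding a box only increases the node parameter.

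It is also worth double-checking that the reduced alternating knot is prime and reduced, so that the standing hypotheses hold. If it degenerates, for example to a trivial twist with no bounded white region, the diagram would need an extra node to ensure $\wh=1$ rather than $0$.
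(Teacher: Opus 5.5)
Your proposal follows the paper's proof: the paper likewise exhibits $T_{p,q}$ as an extended alternating diagram with two crossing nodes (Figure~\ref{fig:Y0407}), observes that it has exactly one bounded white region, and invokes Theorem~\ref{thm:extended}. The one point to fix in Step~2 is the parameter bookkeeping: the two nodes carry parameters $q_1,q_2\ge 1$ with $q_1+q_2=q$, and this split is where $q>1$ is used, not ``each node has parameter $q$''. With that, the corresponding alternating knot is $T_{2,q}$, whose planar graph is a $q$-cycle, so $\wh=1$.
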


\begin{proof}
By definition, a positive torus knot $T_{p,q}$ (where $p > 1$ and $q > 1$) can be represented as an extended alternating knot. Its corresponding extended alternating diagram is depicted in Figure~\ref{fig:Y0407}, where $q = q_1 + q_2$ (with $q_1 \geq 1$ and $q_2 \geq 1$). In this standard diagram, there is exactly one bounded white region. Therefore, by Theorem~\ref{thm:extended}, there exists a PALF whose regular fiber has a genus equal to 1.
\end{proof}

As a specific example, consider the positive torus knot $T_{3,4}$. For its extended alternating diagram, we set the parameters to $p=2$, $q_1=2$, and $q_2=2$. Following our established procedure, we convert this knot into grid position. 
The constructed PALF, along with its regular fiber and vanishing cycles, is illustrated in Figure~\ref{fig:Y0411}. The regular fiber has a genus of $1$---which exactly equals the number of white regions in the planar graph---and possesses $6$ boundary components. (Note that in the Kirby diagram associated with this PALF, the framing of the attaching circle corresponding to $C_0$ is $4$.)

\begin{figure}[htbp]
  \begin{minipage}[t]{0.39\textwidth}
    \centering
         \begin{tikzpicture}[baseline=-0.9cm]
            \node[anchor=south west, inner sep=0] (image) at (0,0)    {\includegraphics[scale=0.5]{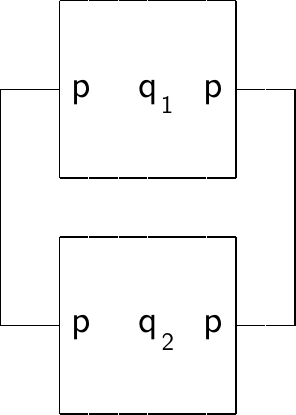}};
            \begin{scope}[x={(image.south east)},y={(image.north west)}]
            \end{scope}
         \end{tikzpicture}
    \caption{The extended alternating diagram of a positive torus knot $T_{p,q}$, where $q = q_1 + q_2$.}
    \label{fig:Y0407}
  \end{minipage}
  \hfill
  \begin{minipage}[t]{0.59\textwidth}
    \centering
       \begin{tikzpicture}
            \node[anchor=south west, inner sep=0] (image) at (0,0)    {\includegraphics[scale=0.5]{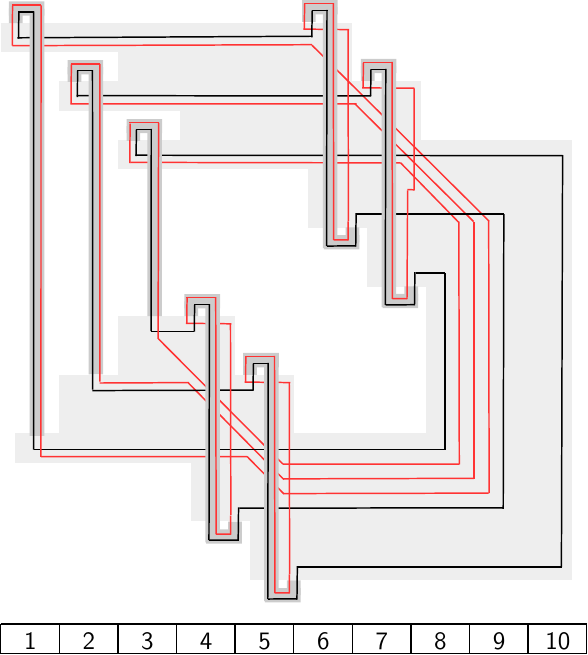}};
            \begin{scope}[x={(image.south east)},y={(image.north west)}]
            \node[font=\scriptsize] at (0, 0.8) {$\widetilde{C_0'}$};
            \end{scope}
         \end{tikzpicture}
    \caption{Monodromy factorization: $(C_0, C_7, C_6, C_5,C_4,C_3,C_2,C_1)$. }
    \label{fig:Y0411}
  \end{minipage}
\end{figure}

Finally, we consider a class of knots obtained by taking positive pretzel knots and replacing each row with the crossings of a positive torus knot. In this paper, we introduce the term \emph{positive torus-pretzel knots} to describe them.

\begin{figure}[htbp]
\begin{minipage}[htbp]{0.4\textwidth}
\centering
\includegraphics[scale=0.4]{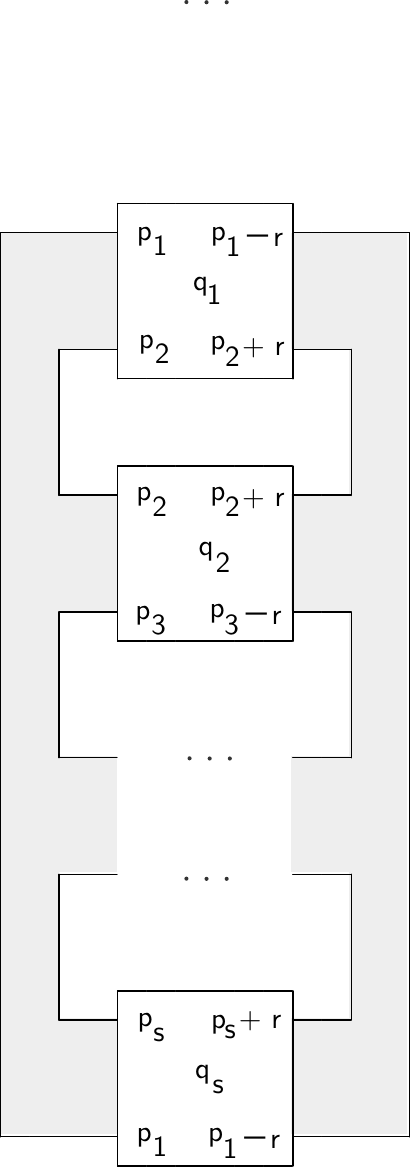}
\caption{The definition of the positive torus-pretzel knot.}
\label{fig:Y0412}
\end{minipage}
\hfill
\begin{minipage}[htbp]{0.58\textwidth}
\centering
\includegraphics[scale=0.62]{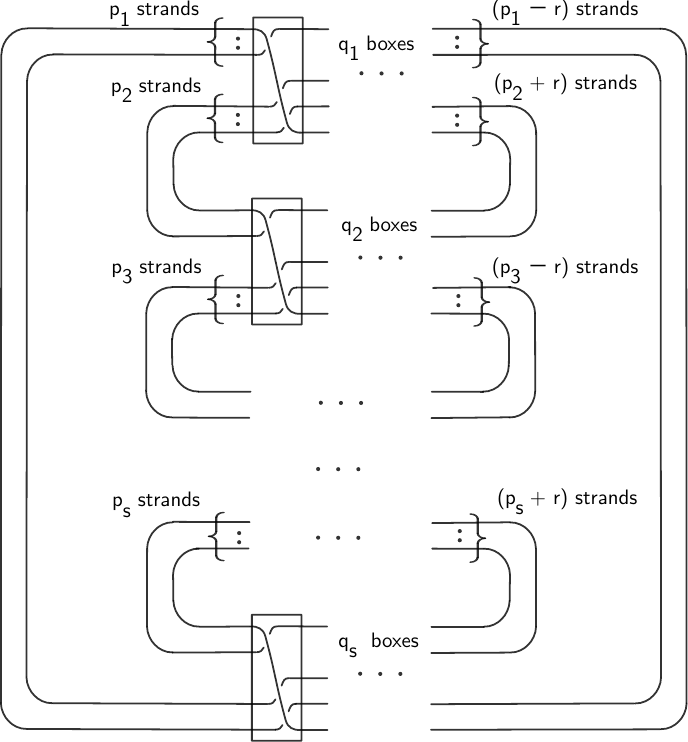}
\caption{A diagram of the positive torus-pretzel knot.}
\label{fig:Y0413}
\end{minipage}
\end{figure}

\begin{figure}[htbp]
\begin{minipage}[htbp]{0.34\textwidth}
\centering
\includegraphics[scale=0.5]{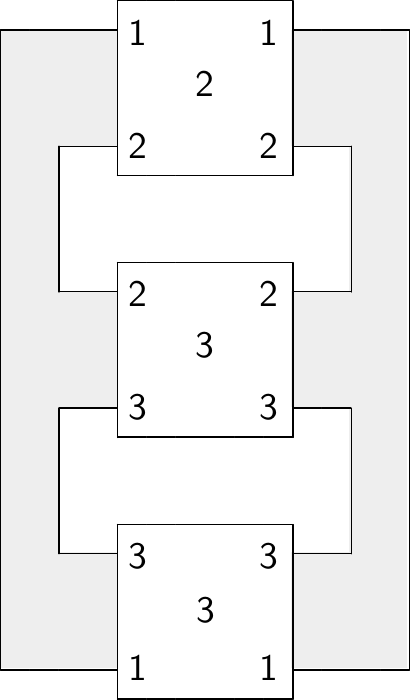}
\caption{An extended alternating diagram of the positive torus-pretzel knot.}
\label{fig:Y0414}
\end{minipage}
\hfill
\begin{minipage}[htbp]{0.64\textwidth}
\centering
\includegraphics[scale=0.37]{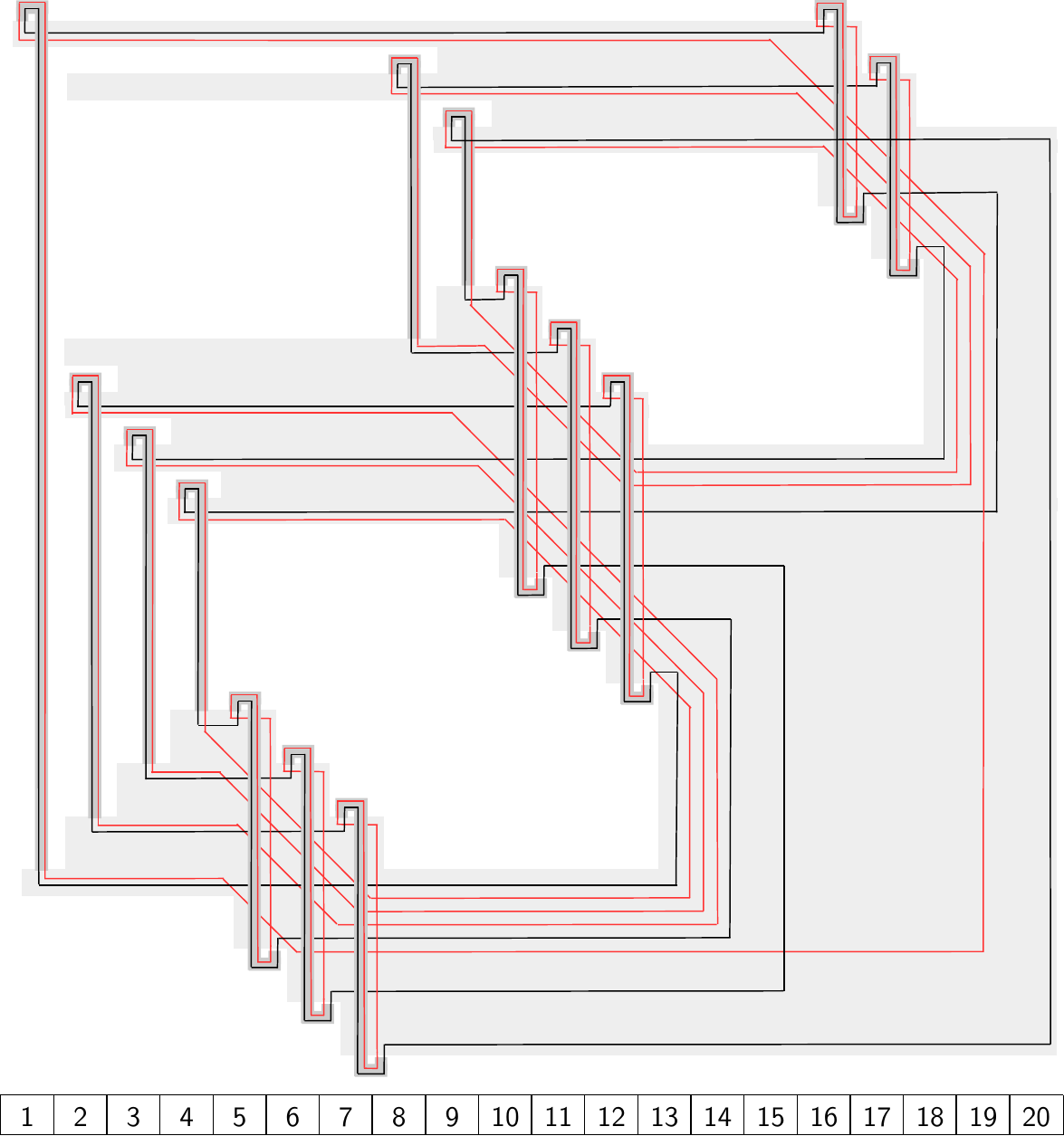}
\caption{Monodromy factorization:
$(C_0, C_{17}, C_{16}, C_{12}, C_{11}, C_{10}, C_9, C_8, C_7, C_6, C_5, $ $C_4, C_3, C_2, C_1)$. }
\label{fig:Y0418}
\end{minipage}
\end{figure}

\begin{dfn}\label{def:torus_pretzel}
A \emph{positive torus-pretzel knot}, denoted $Q(p_1, p_2, \dots, p_s; r; q_1, q_2, \dots, q_s)$, is a knot represented by the extended alternating diagram shown in Figure~\ref{fig:Y0412} (or equivalently, Figure~\ref{fig:Y0413}), provided that the following conditions are satisfied:
\begin{itemize}
\item $s \geq 3$, \quad $p_k \geq 1$, \quad $q_k \geq 1$ \quad $(1 \leq k \leq s)$,
\item if $s$ is odd: \quad $r = 0$,
\item if $s$ is even: \quad $1 - \min \{ p_2, p_4, \dots, p_s \} \leq r \leq \min \{ p_1, p_3, \dots, p_{s-1} \} - 1$.
\end{itemize}
\end{dfn}

The third condition is strictly required to ensure that the values $p_1 - r, p_2 + r, \dots, p_{s-1} - r,$ and $p_s + r$ are all greater than or equal to $1$. Geometrically, each row corresponds to the crossings of a positive torus knot $T_{p_k + p_{k+1}, q_k}$ (where $1 \leq k \leq s$, and we adopt the cyclic convention $p_{s+1} = p_1$).

With this definition in place, we obtain the following corollary.

\begin{cor}\label{cor:torus-pretzel}
Let $X$ be a Stein surface consisting of a single $0$-handle and $m$ $2$-handles ($m \geq 1$) attached along a Legendrian knot. Suppose the topological type of this knot is a positive torus-pretzel knot $Q(p_1, p_2, \dots, p_s; r; q_1, q_2, \dots, q_s)$ with $s$ rows, as defined in Definition~\ref{def:torus_pretzel} and illustrated in Figure~\ref{fig:Y0413}. Then, there exists a PALF whose total space is diffeomorphic to $X$, with a regular fiber of genus $s-1$.
\end{cor}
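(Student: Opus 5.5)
The plan is to reduce Corollary~\ref{cor:torus-pretzel} to Theorem~\ref{thm:extended}, in the same way that Corollary~\ref{cor:pretzel} was reduced to Theorem~\ref{thm:alternating}. Two things have to be checked:
\begin{enumerate}
\item the diagram of $Q(p_1,\dots,p_s;r;q_1,\dots,q_s)$ in Figure~\ref{fig:Y0412} (equivalently Figure~\ref{fig:Y0414}) is a genuine extended alternating diagram in the sense of Definition~\ref{def:extended};
\item the planar graph attached to it has exactly $s-1$ bounded white regions.
\end{enumerate}
Once both hold, Theorem~\ref{thm:extended} gives a PALF on $X$ whose regular fiber has genus $\wh=s-1$. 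This is valid for any Legendrian representative: the Thurston--Bennequin deficit is handled by the modification of Section~\ref{sec:non_maximal_tb}, as in the two preceding corollaries.

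\textbf{Step 1: the diagram is extended alternating.} Each of the $s$ rows is a single crossing node with parameter $q_k\ge 1$. Its strands are the parallel bundles of sizes $p_k$ and $p_{k+1}$, or $p_k\mp r$ and $p_{k+1}\pm r$ in the even case, with the cyclic convention $p_{s+1}=p_1$. Since the rows carry the crossings of $T_{p_k+p_{k+1},q_k}$, the number of strands entering each node from the left equals the number leaving to the right, namely $p_k+p_{k+1}$ (the $r$ shifts cancel). Because every $p_k\ge1$, this number is at least $2>1$.

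The hypotheses on $r$ in Definition~\ref{def:torus_pretzel} are exactly what makes every bundle size $p_1-r,\,p_2+r,\dots$ at least $1$. So each bundle is nonempty and the strands connect the nodes as the definition requires. The nodes sit at lattice points arranged in one column, as in Figure~\ref{fig:Y0414}, the pretzel analogue of Figure~\ref{fig:Y0407}. I would also note how the parity of $s$ enters: for $s$ odd one must take $r=0$ so that the bundles close up consistently around the cycle.

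\textbf{Step 2: apply the reduction and count white regions.} Apply the reduction operation of Definition~\ref{def:extended}: keep only the strands bounding gray regions, and replace each node by $q_k$ positive half-twists on two strands. The result should be the positive pretzel knot $P(q_1,\dots,q_s)$ of Figure~\ref{fig:Y0206}. The gray regions of the extended diagram lie between horizontally adjacent nodes (their left and right edges), and these correspond to the gray regions of the pretzel diagram. The white regions are bounded by the top and bottom edges of consecutive nodes. Consecutive rows $k$ and $k+1$ for $k=1,\dots,s-1$ enclose one bounded white region each, and the remaining side is the unbounded region.

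So the associated planar graph has $s-1$ bounded faces, which is exactly the count already used in the proof of Corollary~\ref{cor:pretzel}. Theorem~\ref{thm:extended} then yields genus $s-1$.

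\textbf{Main obstacle.} The real content is in Step 1 together with the identification in Step 2: one must check that passing strands through the gray regions (the $p_k$-bundles and the $r$-shifts) creates no additional bounded white regions. By definition, white regions are bounded only by the top and bottom node edges and the strands, and strands are allowed to pass through gray regions (Definition~\ref{def:extended}, and the remark after Figure~\ref{fig:Y0402}). I would first inspect Figure~\ref{fig:Y0414} to confirm that every extra strand runs vertically inside a gray region between adjacent nodes. The white regions then coincide with those of the pretzel diagram, and the count $s-1$ follows.
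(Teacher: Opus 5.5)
Your proposal is correct and takes essentially the same route as the paper, which simply notes that Figure~\ref{fig:Y0412} is an extended alternating diagram with exactly $s-1$ bounded white regions and then invokes Theorem~\ref{thm:extended}; your Steps~1 and~2 make those two checks explicit (strand counts at each node, the role of the constraints on $r$, and the reduction to $P(q_1,\dots,q_s)$). One harmless wording slip: you describe the nodes as forming a single column, so there are no horizontally adjacent nodes, and the gray regions are the ones flanking that column on the left and right (bounded by the nodes' left and right edges) rather than regions ``between horizontally adjacent nodes''; this does not affect the white-region count.
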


\begin{proof}
By definition, a positive torus-pretzel knot $Q(p_1, p_2, \dots, p_s; r; q_1, q_2, \dots, q_s)$ with $s$ rows is an extended alternating knot. Its corresponding extended alternating diagram is depicted in Figure~\ref{fig:Y0412}, which clearly exhibits exactly $s-1$ bounded white regions. Therefore, by Theorem~\ref{thm:extended}, there exists a PALF whose regular fiber has a genus equal to $s-1$.
\end{proof}

As a specific example, consider the positive torus-pretzel knot $Q(1,2,3;0;2,3,3)$. Its extended alternating diagram is shown in Figure~\ref{fig:Y0414}. The constructed PALF, along with its regular fiber and vanishing cycles, is illustrated in Figure~\ref{fig:Y0418}. The regular fiber has a genus of 2 and possesses 11 boundary components. (Note that in the Kirby diagram associated with this PALF, the framing of the attaching circle corresponding to $C_0$ is $-6$.)

\section{Modifications for knots with non-maximal Thurston--Bennequin numbers}\label{sec:non_maximal_tb}

As noted in Section~\ref{sec:preliminaries}, when the Thurston--Bennequin (tb) number of the Legendrian knot $\widetilde{C_{0k}}$ ($1 \leq k \leq m$) is strictly less than its maximal value, an additional operation is required. This section details the construction method for this specific scenario.

Let $\mathrm{tb}(\widetilde{C_{0k}})$ denote the tb number of the Legendrian knot $\widetilde{C_{0k}}$, and let $\overline{\mathrm{tb}}(\widetilde{C_{0k}})$ denote its maximal possible tb number. Suppose that $\mathrm{tb}(\widetilde{C_{0k}})$ is strictly less than $\overline{\mathrm{tb}}(\widetilde{C_{0k}})$ by a deficit of $d_k$ (where $d_k \geq 1$). The modification proceeds as follows:

\begin{itemize}
\item First, starting from the Legendrian knot $\widetilde{C_{0k}}$ ($1 \leq k \leq m$), we obtain a knot $\widetilde{C_{0k}'}$ in grid position. This is achieved by applying the Mondrian diagram method described in Section~\ref{sec:preliminaries}, followed by the deformation of any vertical segment crossing a horizontal segment, as illustrated in Figure~\ref{fig:Y0301} of Subsection~\ref{subsec:alternating_main}.
\item For each resulting knot $\widetilde{C_{0k}'}$, the vertical segment located at the maximum column number strictly does not cross any horizontal segments and is bounded by an NE corner and an SE corner. We perform exactly $d_k$ consecutive SE stabilizations at this specific SE corner to obtain the modified knot $\widetilde{C_{0k}''}$.
\end{itemize}

Since each SE stabilization at an SE corner decreases the tb number by exactly one, it naturally follows that $\mathrm{tb}(\widetilde{C_{0k}''})$ is precisely equal to the original target value $\mathrm{tb}(\widetilde{C_{0k}})$.

As a concrete example, consider the Legendrian torus knot $T_{3,3}$ composed of three unknots $\widetilde{C_{0k}}$ ($1 \leq k \leq 3$). The maximal tb number of an unknot is $-1$. Let us assume our specific components have $\mathrm{tb}(\widetilde{C_{01}}) = -3$, $\mathrm{tb}(\widetilde{C_{02}}) = -1$, and $\mathrm{tb}(\widetilde{C_{03}}) = -2$. Consequently, their respective deficits are $d_1 = 2$, $d_2 = 0$, and $d_3 = 1$. The resulting knots $\widetilde{C_{0k}''}$ in grid position and the subsequently constructed PALF are illustrated in Figure~\ref{fig:Y0501}. The regular fiber has a genus of $1$ and possesses $8$ boundary components. (Note that in the Kirby diagram associated with this PALF, the framings of the attaching circles corresponding to $\overline{C_0}$, $\overline{C_1}$, and $\overline{C_2}$ are $-4$, $-2$, and $-3$, respectively.)

\begin{figure}[htbp]
\centering  
\begin{tikzpicture}
    \node[anchor=south west, inner sep=0] (image) at (0,0)  {\includegraphics[scale=0.63]{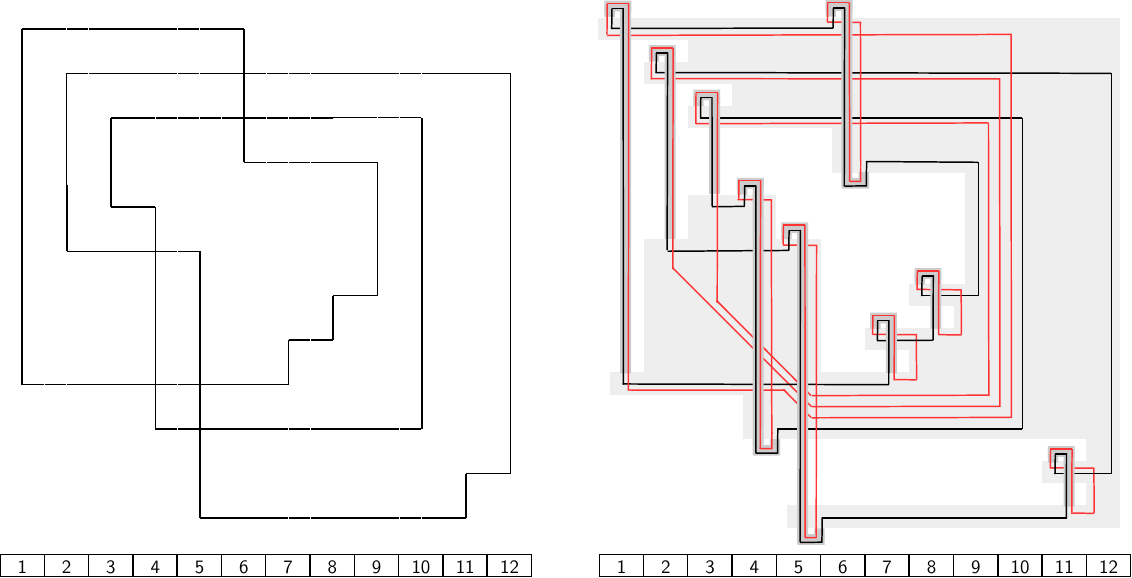}};
    \begin{scope}[x={(image.south east)},y={(image.north west)}]
        \node [below] at (0.23, -0.04) {(a)};
        \node [below] at (0.78, -0.04) {(b)};
        
        \node[font=\scriptsize] at (0.134, 0.98) {$\widetilde{C_{01}''}$};
        \node[font=\scriptsize] at (0.134, 0.905) {$\widetilde{C_{02}''}$};
        \node[font=\scriptsize] at (0.134, 0.83) {$\widetilde{C_{03}''}$};
        \node[font=\scriptsize] at (0.68, 0.98) {$C_{01}$};
        \node[font=\scriptsize] at (0.68, 0.905) {$C_{02}$};
        \node[font=\scriptsize] at (0.68, 0.83) {$C_{03}$};
    \end{scope}
\end{tikzpicture}
\caption{A modification for the Legendrian torus knot $T_{3,3}$ with a non-maximal Thurston--Bennequin number: (b) Monodromy factorization:
($C_{03}, C_{02}, C_{01}, C_{11}, C_8, C_7, C_6, C_5, C_4, C_3, C_2, C_1$).}
\label{fig:Y0501}
\end{figure}

\bibliographystyle{amsalpha}
\bibliography{ALF_1}

\end{document}